\newtheorem{theorem}{Theorem}[section]
\newtheorem{example}[theorem]{Example}
\newtheorem{lemma}[theorem]{Lemma}
\newtheorem{proposition}[theorem]{Proposition}
\newtheorem{remark}[theorem]{Remark}
\begin{document}
\title{Hopf Galois Structures on Primitive Purely Inseparable Extensions}
\author{Alan Koch}
\address{Department of Mathematics, Agnes Scott College, 141 E. College Ave., Decatur,
GA\ 30030 USA}
\email{akoch@agnesscott.edu}
\date{\today       }

\begin{abstract}
Let $L/K$ be a primitive purely inseparable extension of fields of
characteristic $p$, $\left[  L:K\right]  >p.$ It is well known that $L/K$ is
Hopf Galois for some Hopf algebra $H$, and it is suspected that $L/K$ is Hopf
Galois for numerous choices of $H$. We construct a family of $K$-Hopf algebras
$H$ for which $L$ is an $H$-Galois object. For some choices of $K$ we will
exhibit an infinite number of such $H.$ We provide some explicit examples of
the dual, Hopf Galois, structure on $L/K.$

\end{abstract}
\maketitle

Let $L/K$ be a modular extension. In \cite{Chase76}, Chase shows that $L$ is a
principal homogenous space for some infinitesimal $K$-group scheme $G$. If
$G=\operatorname*{Spec}H$ then $H$ is a finite dimensional, commutative,
cocommutative $K$-Hopf algebra which is local with local linear dual
(hereafter, ``local-local''), and $L$ is an $H$-Galois object. Interpreted
using duality, this shows that $L/K$ is Hopf Galois for some finite
dimensional, commutative, cocommutative, local-local $K$-Hopf algebra.

A natural question arises: for a given extension $L/K,$ is it Hopf Galois for
a unique choice of $H$? It is well-known that the answer to this question is
``no''. Modular extensions are, by their definition, purely inseparable. In
the work cited above, Chase writes ``[s]crutiny of the simplest examples shows
that a modular extension can be a PHS for many different truncated [$K$]-group
schemes $G.$'' This comment suggests that $L$ is an $H$-Galois object for many
choices of $H.$

The question is then modified: for a given extension $L/K,$ can we describe
all of the Hopf algebras which make it Hopf Galois? The separable analogue,
where $L/K$ is a separable extension, is definitively answered in
\cite{GreitherPareigis87}, which describes all such $H$ using group theory.
The Hopf algebras in the separable case correspond to a certain class of
normal, regular subgroups of the symmetric group $S_{n}$ where $n=\left[
L:K\right]  .$ This elegant result shows that the classification of Hopf
Galois structures depends not on the fields but on the group. Clearly the
number of such Hopf algebras is finite.

In this work, we focus primarily on the simplest class of modular extensions,
namely the primitive extensions. It is well-known that if $L$ is an $H$-Galois
object then $H$ must be monogenic, that is, $H$ is generated as a $K$-algebra
by a single element. We will construct a family of monogenic $K$-Hopf algebras
of dimension equal to $\left[  L:K\right]  =p^{n},$ $n\geq2,$ and show that
each makes $L$ into a Hopf Galois object. These Hopf algebras fall into $n-1$
classes, and the $r^{\text{th}}$ class is parameterized by elements of
$K^{\times}/\left(  K^{p^{r+1}-1}\right)  ^{\times}.$ Not only is $L$ an
$H$-Galois object for each of our constructed Hopf algebras, the realization
of $L$ as an $H$-Galois object can be done in $p^{n-1}\left(  p-1\right)  $
different ways. Unlike the separable case, the number of Hopf Galois
extensions evidently depends on the fields; in particular, our work will
produce examples where the extension $L/K$ is Hopf Galois for an infinite
number of Hopf algebras.

We will also briefly discuss general modular extensions.\ The work presented
in the simple case extends to modular extensions quite easily, however we will
show that there are modular extensions which are $H$-Galois objects for Hopf
algebras which cannot be constructed in the manner presented here.

It should be noted that these constructions can also be done geometrically,
using the language of group schemes and principal homogenous spaces (or
``torsors''). Indeed, the Hopf algebras we construct represent certain
subgroups of group schemes of finite length Witt vectors. We have opted to
present our results using a purely algebraic approach for three reasons.
First, the language in \cite{GreitherPareigis87} is one of Hopf algebras and
Hopf-Galois extensions, and as we are investigating an inseparable analogue to
the results in that paper it seems natural to try to use the same language as
much as possible. Second, we feel the question is stated more naturally using
Hopf algebras -- ``given a field extension $L/K$, for which Hopf algebras is
it a Hopf Galois extension'' makes the point more directly than ``for
$\operatorname*{Spec}K\rightarrow\operatorname*{Spec}L,$ $L$ a field, for
which group schemes $G$ does $\operatorname*{Spec}L$ appear as a torsor?''
does. Third, in \cite{Koch14b} we use these Hopf algebras to describe the ring
of integers in the case where $L$ and $K$ are local fields; an explicit
description of the Hopf algebra action is necessary in that work.

Throughout, $p$ is a fixed prime and $K$ is an imperfect field containing a
perfect field $k$ of characteristic $p.$ All unadorned tensors are over $K$.
We denote by $K^{p^{-\infty}}$ the perfect closure of $K$ in some algebraic
closure. All rings (and algebras) are assumed to be commutative. All Hopf
algebras are assumed to be finite, commutative, cocommutative, of $p$-power
rank, and local-local.

The author would like to thank Nigel Byott and Lindsay Childs for their input
during the creation of this paper.

\section{Background}

We briefly describe the notion of Hopf Galois extensions and Hopf Galois
objects. More details can be found, e.g., in \cite{Childs00}. Let $H$ be a
$K$-Hopf algebra, comultiplication $\Delta$ and counit $\varepsilon.$ We say
that $L$ is an $H$-module algebra if $L$ is an $H$-module such that, for all
$a,b\in L$ we have%
\[
h\left(  ab\right)  =\operatorname{mult}\Delta\left(  h\right)  \left(
a\otimes b\right)  .
\]
If furthermore the $K$-linear map $L\otimes H\rightarrow\operatorname*{End}%
_{K}\left(  L\right)  ,\;\left(  a\otimes h\right)  \mapsto\left(  b\mapsto
ah\left(  b\right)  \right)  $ is an isomorphism, then we say $L/K$ is an
$H$-Galois extension, or simply Hopf Galois if the Hopf algebra is understood.
This can be seen as a generalization of the usual Galois theory: if $E/F$ is a
Galois extension, $\Gamma=\operatorname*{Gal}\left(  E/F\right)  $ then $E/F$
is Hopf Galois via the group algebra $F\left[  \Gamma\right]  .$

Loosely, the notion of a Hopf Galois object is dual to that of a Hopf Galois
extension. Given a $K$-Hopf algebra $H$, suppose there is a map $\alpha
:L\rightarrow L\otimes H$ such that
\begin{align*}
\left(  \alpha\otimes1\right)  \alpha &  =\left(  1\otimes\Delta\right)
\alpha:L\rightarrow L\otimes H\otimes H\\
\operatorname{mult}\left(  1\otimes\varepsilon\right)  \alpha &
=\text{id}_{L}.
\end{align*}
Then $L$ is said to be an $H$-comodule. If furthermore the map $\gamma
:L\otimes L\rightarrow L\otimes H$ given by $\gamma\left(  a\otimes b\right)
=\left(  a\otimes1\right)  \alpha\left(  b\right)  $ is an isomorphism, then
$L$ is an $H$-Galois object (or $H$-principal homogeneous space). It can be
shown that $L/K$ is $H$-Galois if and only if $L$ is an $H^{\ast}$-Galois
object, where $H^{\ast}=\operatorname*{Hom}_{K}\left(  H,K\right)  $ is the
linear dual to $H$.

In the sections that follow, we construct Hopf algebras $H$ for which $L$ is
an $H$-Galois object. Dualizing will put $H^{\ast}$-Galois structures on $L/K.$

The construction of both the comultiplication maps $\Delta$ and coaction maps
$\alpha$ that follow rely heavily on Witt vector addition. Much of the
background on Witt vectors can be found in \cite{Jacobson75}. For the
convenience of the reader we will briefly recall the construction and
illustrate some its properties which will be necessary for the rest of the paper.

For each positive integer $d$, define
\[
w_{d}\left(  Z_{0},\dots,Z_{d}\right)  =Z_{0}^{p^{d}}+pZ_{1}^{p^{d-1}}%
+\cdots+p^{d}Z_{d}\in\mathbb{Z}\left[  Z_{0},\dots,Z_{d}\right]  .
\]
The polynomials $w_{d}$ are called Witt polynomials. Define $S_{d}%
:=S_{d}\left(  \left(  X_{0},\dots,X_{d}\right)  ;\left(  Y_{0},\dots
,Y_{d}\right)  \right)  $ recursively by%
\[
w_{d}\left(  S_{0},\dots,S_{d}\right)  =w_{d}\left(  X_{0},\dots,X_{d}\right)
+w_{d}\left(  Y_{0},\dots,Y_{d}\right)  ,
\]
i.e.,%
\[
S_{d}=\frac{1}{p^{d}}\left(  w_{d}\left(  X_{0},\dots,X_{d}\right)
+w_{d}\left(  Y_{0},\dots,Y_{d}\right)  -S_{0}^{p^{d}}-S_{1}^{p^{d-1}}%
-\cdots-S_{d-1}^{p}\right)  .
\]
Clearly $S_{d}\in\mathbb{Q}\left[  X_{0},\dots,X_{d},Y_{0},\dots,Y_{d}\right]
$; a less obvious, but fundamental, result is that we in fact have $S_{d}%
\in\mathbb{Z}\left[  X_{0},\dots,X_{d},Y_{0},\dots,Y_{d}\right]  .$ To give
two explicit examples of these polynomials,%
\begin{align*}
S_{0}\left(  X_{0};Y_{0}\right)   &  =X_{0}+Y_{0}\\
S_{1}\left(  \left(  X_{0},X_{1}\right)  ;\left(  Y_{0},Y_{1}\right)
\right)   &  =X_{1}+Y_{1}-\sum_{i=1}^{p-1}\frac{\left(  p-1\right)
!}{i!\left(  p-i\right)  !}X_{0}^{i}Y_{0}^{p-i}.
\end{align*}
Let $W\left(  \mathbb{Z}\right)  =\left\{  \left(  a_{0},a_{1},\dots\right)
:a_{i}\in\mathbb{Z}\right\}  ,$ and define a binary operation on $W\left(
\mathbb{Z}\right)  $ by%
\[
\left(  a_{0},a_{1},\dots\right)  +\left(  b_{0},b_{1},\dots\right)  =\left(
S_{0}\left(  a_{0};b_{0}\right)  ,S_{1}\left(  \left(  a_{0},a_{1}\right)
;\left(  b_{0},b_{1}\right)  \right)  ,\dots\right)  .
\]
Then $W\left(  \mathbb{Z}\right)  $ is a group: the identity is $\left(
0,0,\dots\right)  $ and the additive inverse is obtained by negating the
components. This is typically proved by observing that the map%
\begin{align*}
W\left(  \mathbb{Z}\right)   &  \rightarrow\prod_{i=0}^{\infty}\mathbb{Z}\\
\left(  a_{0},a_{1},\dots\right)   &  \rightarrow\left(  w_{0}\left(
a_{0}\right)  ,w_{1}\left(  a_{0},a_{1}\right)  ,\dots\right)
\end{align*}
is a bijection from which $W\left(  \mathbb{Z}\right)  $ inherits its
structure from the product on the right. As $W\left(  \mathbb{Z}\right)  $ is
a group, the component operation $S_{d}$ is associative for all $d$.

By replacing $\mathbb{Z}$ with a $\mathbb{Z}$-algebra $R,$ we obtain the group
$W\left(  R\right)  .$ The polynomials $S_{d}$ can then be viewed as elements
of $R\left[  X_{0},\dots,X_{d},Y_{0},\dots,Y_{d}\right]  .$ In fact, for any
commutative ring $R$ we may view $W$ as an $R$-group scheme.

We conclude this section by recording two well-known observations which will
be needed later. The first equality holds because $S_{d}$ is a polynomial
expression in $x_{0},\dots,x_{d},y_{0},\dots,y_{d}.$ The second follows from
that fact that $S_{d}\left(  \left(  X_{0},\dots X_{d}\right)  ;\left(
Y_{0},\dots Y_{d}\right)  \right)  $ is a homogenous polynomial of degree
$p^{d},$ where $X_{i}$ and $Y_{i}$ each have weight $p^{i}$, $0\leq i\leq d.$

\begin{lemma}
\label{wittlem}Let $A$ and $B$ be $K$-algebras. Let $f:A\rightarrow B$ be a
$K$-algebra map. Then, for $\left(  x_{0},x_{1}\dots\right)  ,\left(
y_{0},y_{1},\dots\right)  \in W\left(  A\right)  ,\;c\in K$ we have, for all
$d$,
\begin{align*}
f\left(  S_{d}\left(  \left(  x_{0},\dots,x_{d}\right)  ;\left(  y_{0}%
,\dots,y_{d}\right)  \right)  \right)   &  =S_{d}\left(  \left(  f\left(
x_{0}\right)  ,\dots,f\left(  x_{d}\right)  \right)  ;\left(  f\left(
y_{0}\right)  ,\dots,f\left(  y_{d}\right)  \right)  \right)  \in B\\
cS_{d}\left(  \left(  x_{0},\dots,x_{d}\right)  ;\left(  y_{0},\dots
,y_{d}\right)  \right)   &  =S_{d}\left(  \left(  c^{p^{-d}}x_{0},\dots
,cx_{d}\right)  ;\left(  c^{p^{-d}}y_{0},\dots,cy_{d}\right)  \right)  \in A
\end{align*}
\end{lemma}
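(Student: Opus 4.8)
The plan is to treat the two displayed identities separately, since they rest on different features of the addition polynomials $S_d$.

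For the first identity, the only property of $f$ I would use is that it is a ring homomorphism: a $K$-algebra map is additive, multiplicative, and sends $1_A$ to $1_B$, hence sends $n\cdot 1_A$ to $n\cdot 1_B$ for every $n\in\mathbb{Z}$. Because $S_d$ is a polynomial with integer coefficients evaluated at $x_0,\dots,x_d,y_0,\dots,y_d\in A$, applying $f$ to such an evaluation commutes with the polynomial operations; substituting $f(x_i)$ and $f(y_i)$ into the same integer polynomial yields exactly $S_d((f(x_0),\dots,f(x_d));(f(y_0),\dots,f(y_d)))$. I would dispatch this in a single sentence.

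The second identity is the substantive one, and I would reduce it to the weighted homogeneity of $S_d$ noted before the statement: $S_d$ is homogeneous of weighted degree $p^d$ once $X_i$ and $Y_i$ are each assigned weight $p^i$. I would prove this by induction on $d$ from the recursion $p^dS_d=w_d(X_0,\dots,X_d)+w_d(Y_0,\dots,Y_d)-\sum_{i=0}^{d-1}S_i^{p^{d-i}}$. The base case $S_0=X_0+Y_0$ has weighted degree $1=p^0$. For the inductive step I would first observe that each Witt polynomial $w_d$ is itself weighted homogeneous of degree $p^d$, since its $i$-th term $p^iZ_i^{p^{d-i}}$ has weight $p^i\cdot p^{d-i}=p^d$; hence $w_d(X)+w_d(Y)$ has weighted degree $p^d$. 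By the inductive hypothesis $S_i$ has weighted degree $p^i$, so $S_i^{p^{d-i}}$ has weighted degree $p^i\cdot p^{d-i}=p^d$ for each $i<d$. Every term on the right thus has weighted degree $p^d$, so $p^dS_d$, and therefore $S_d$, is weighted homogeneous of degree $p^d$.

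Granting the homogeneity, the second identity follows by a monomial count: a monomial of $S_d$ has the form $\prod_i X_i^{a_i}\prod_j Y_j^{b_j}$ with $\sum_i a_ip^i+\sum_j b_jp^j=p^d$, and the substitution $x_i\mapsto c^{p^{i-d}}x_i$, $y_j\mapsto c^{p^{j-d}}y_j$ multiplies it by $c^{p^{-d}(\sum_i a_ip^i+\sum_j b_jp^j)}=c^{p^{-d}\cdot p^d}=c$. Since every monomial scales by the same factor $c$, so does the whole polynomial. The one point needing care --- and the only genuine obstacle --- is that for imperfect $K$ the fractional powers $c^{p^{i-d}}$ in the substituted arguments need not lie in $K$, so the right-hand side must be read either formally, by adjoining an indeterminate $t$ with $t^{p^d}=c$ and working over $A[t]/(t^{p^d}-c)$ or in a perfect closure, where the scaling $S_d((t^{p^0}X_0,\dots,t^{p^d}X_d);(t^{p^0}Y_0,\dots,t^{p^d}Y_d))=t^{p^d}S_d((X_0,\dots,X_d);(Y_0,\dots,Y_d))$ is an honest polynomial identity, or under the understanding that the relevant roots exist in the ring to which the lemma is applied. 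Either way, once the weighted homogeneity is established the equality is immediate, so the bulk of the write-up should go into the inductive homogeneity claim.
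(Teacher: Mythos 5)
Your proposal is correct and follows exactly the route the paper takes: the paper gives no formal proof, only the two-sentence remark preceding the lemma that the first identity holds because $S_d$ is an integer polynomial in its arguments and the second because $S_d$ is weighted-homogeneous of degree $p^d$ with $X_i,Y_i$ of weight $p^i$. Your write-up simply supplies the details the paper leaves to the reader, namely the induction on $d$ establishing the weighted homogeneity and the caveat (which the paper records in the remark after the lemma) that the fractional powers $c^{p^{i-d}}$ must be read in $K^{p^{-\infty}}$.
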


\begin{remark}
Since $K$ is not perfect, it is possible that $c^{p^{-i}}\notin K.$ However,
we may view these as elements of $K^{p^{-\infty}}.$
\end{remark}

\section{Monogenic Hopf Algebras}

The objective of this section is to introduce a new family of monogenic
$K$-Hopf algebras. We will accomplish this by generalizing a classification of
monogenic $k$-Hopf algebras (recall $k$ is perfect). We do not claim that our
adaptation to $K$ yields all monogenic Hopf algebras.

First, we briefly describe the collection of monogenic Hopf algebras over $k$.
This classification appears in Dieudonn\'{e} module form in \cite{Koch01b} and
explicit Hopf algebra descriptions are given in \cite{Koch03}.

Fix a positive integer $n$. By \cite[14.4]{Waterhouse79}, all monogenic Hopf
algebras of rank $p^{n}$ share the same $k$-algebra structure, namely
$H=k\left[  t\right]  /\left(  t^{p^{n}}\right)  ,$ so a study of Hopf algebra
structures reduces to studying the various comultiplications one can put on
this $k$-algebra. The simplest comultiplication can be obtained by letting
$\Delta\left(  t\right)  =t\otimes_{k}1+1\otimes_{k}t$ -- that is, $t$ is a
primitive element. The others are best described using Witt vector addition polynomials.

Let $0<r<n.$ For $\eta\in k^{\times},$ define a sequence $\left\{  \eta
_{i}:i\in\mathbb{Z}^{+}\right\}  $ recursively by%
\begin{align*}
\eta_{1}  &  =\eta\\
\eta_{i}  &  =\eta^{p^{1-i}}\eta_{i-1}^{p^{r}}.
\end{align*}
Notice that the $\eta_{i}$ implicitly depend on $r$. Explicitly, we have
$\eta_{i}=\eta^{e_{i}},$ where%
\begin{equation}
e_{i}=p^{-\left(  i-1\right)  }+p^{r-\left(  i-2\right)  }+p^{2r-\left(
i-3\right)  }+\cdots+p^{\left(  i-1\right)  r}=\sum_{j=0}^{i-1}p^{jr-\left(
i-j-1\right)  }. \label{exp}%
\end{equation}

Let $H=k\left[  t\right]  /\left(  t^{p^{n}}\right)  $, and let $d=\left\lceil
n/r\right\rceil -1.$ Define $\Delta:H\rightarrow H\otimes_{k}H$ by%
\[
\Delta\left(  t\right)  =S_{d}\left(  \left(  \eta_{d}t^{p^{dr}}\otimes
_{k}1,\dots,\eta_{1}t^{p^{r}}\otimes_{k}1,t\otimes_{k}1\right)  ;\left(
1\otimes_{k}\eta_{d}t^{p^{dr}},\dots,1\otimes_{k}\eta_{1}t^{p^{r}}%
,1\otimes_{k}t\right)  \right)  .
\]
\newline This gives $H\;$the structure of a $k$-Hopf algebra with counit
$\varepsilon\left(  t\right)  =0$ and antipode $\lambda\left(  t\right)  =-t.$
We will denote this Hopf algebra by $H_{n,r,\eta}.\;$To see that the Hopf
algebra axioms are satisfied, first notice that coassociativity follows from
the associativity of $S_{d}.$ Also we use Lemma \ref{wittlem} to obtain%
\begin{multline}
\operatorname{mult}\left(  1\otimes\varepsilon\right)  \Delta\left(  t\right)
=S_{d}\left(  \left(  \eta_{d}t^{p^{dr}}\varepsilon\left(  1\right)
,\dots,t\varepsilon\left(  1\right)  \right)  ;\left(  \eta_{d}\varepsilon
\left(  t\right)  ^{p^{dr}},\dots,\varepsilon\left(  t\right)  \right)
\right)  =0\label{witt}\\
\operatorname{mult}\left(  1\otimes\lambda\right)  \Delta\left(  t\right)
=S_{d}\left(  \left(  \eta_{d}t^{p^{dr}}\lambda\left(  1\right)
,\dots,t\lambda\left(  1\right)  \right)  ;\left(  \eta_{d}\lambda\left(
t\right)  ^{p^{dr}},\dots,\lambda\left(  t\right)  \right)  \right)  =0.
\end{multline}

It is shown in \cite{Koch01b} that all of the monogenic local-local Hopf
algebras of dimension $p^{n}$ are of the form $H_{n,r,\eta}.$ Furthermore,
$H_{n,r^{\prime},\eta^{\prime}}\cong H_{n,r,\eta}$ if and only if
$r=r^{\prime}$ and $\eta^{\prime}/\eta=\beta^{p^{r}-p^{-1}}$ for some
$\beta\in k.$ In the case where $k$ is finite, this allows us to count the
number of monogenic Hopf algebras \cite[Cor. 3.2]{Koch01b}. On the other hand,
if $k$ is algebraically closed, there are exactly $n$ monogenic Hopf algebras
of rank $p^{n}.$

Now, we adapt the classification in the perfect field case to the case where
$K$ contains $k$ and is imperfect. Certainly, $H_{n,r,\eta}\otimes_{k}K$ is a
$K$-Hopf algebra of dimension $p^{n}$. However, a careful reading of the
results above reveals that $\eta$ can be replaced by a more general element of
$K$.

Pick $0<r<n$ . Let $g\in\left(  K^{p^{-\infty}}\right)  ^{\times}$ and define
\[
g_{1}=g,\;g_{i}=g^{p^{1-i}}g_{i-1}^{p^{r}},\;i>1.
\]
Note that $g_{i}\notin K$ in general, even if $g\in K.$ However, it can easily
be shown that if $g^{p}\in K$ then $g_{i}^{p^{i}}\in K$.

Our strategy will be to construct a comultiplication $\Delta$ on $H=K\left[
t\right]  /\left(  t^{p^{n}}\right)  \;$such that
\[
\Delta\left(  t\right)  =S_{d}\left(  \left(  g_{d}t^{p^{dr}}\otimes
1,\dots,g_{1}t^{p^{r}}\otimes1,t\otimes1\right)  ;\left(  1\otimes
g_{d}t^{p^{dr}},\dots,1\otimes g_{1}t^{p^{r}},1\otimes t\right)  \right)
\]
for $g\in K^{1/p}$ (although the final form will differ slightly from this).
In order to do so, we need to prove that the expression above is an element of
$H\otimes H.$ The following result accomplishes this.

\begin{lemma}
Let $g\in K^{1/p},$ and let $\left\{  g_{i}\right\}  $ be defined as above.
Then for all $d\geq0,$%
\[
S_{d}\left(  \left(  g_{d}u^{p^{dr}},\dots,g_{1}u^{p^{r}},u\right)  ;\left(
g_{d}v^{p^{dr}},\dots,g_{1}v^{p^{r}},v\right)  \right)  \in K\left[
u,v\right]  .
\]
\end{lemma}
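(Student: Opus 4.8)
The plan is to argue monomial by monomial, reducing the whole statement to a single numerical divisibility fact about the exponents $e_i$ of \eqref{exp}. Since $S_d\in\mathbb{Z}[X_0,\dots,X_d,Y_0,\dots,Y_d]$ and is weighted-homogeneous of weighted degree $p^d$ (with $X_i,Y_i$ of weight $p^i$), after the substitution $X_j=g_{d-j}u^{p^{(d-j)r}}$, $Y_j=g_{d-j}v^{p^{(d-j)r}}$ (with the convention $g_0:=1$) every monomial $\prod_{j=0}^d X_j^{a_j}Y_j^{b_j}$ occurring in $S_d$ becomes $c\,g^{M}u^{P}v^{Q}$ for some $c\in\mathbb{Z}$ and some exponents $P,Q\ge 0$, where
\[
M=\sum_{j=0}^d (a_j+b_j)\,e_{d-j},\qquad \sum_{j=0}^d p^j(a_j+b_j)=p^d .
\]
Because $g\in K^{1/p}$ we have $g^p\in K$, so $g^{M}\in K$ as soon as $M$ is a non-negative integer divisible by $p$ (if $g\in K$ this is automatic; otherwise $[K(g):K]=p$ and $g^{M}\in K$ exactly when $p\mid M$). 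Thus the lemma follows once I show that $M=\sum_{j}n_j e_{d-j}\in p\mathbb{Z}_{\ge 0}$ whenever $n_j:=a_j+b_j\ge 0$ and $\sum_{j}p^j n_j=p^d$.

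This numerical claim is the heart of the matter, and the tool I would use is the recursion $e_s=p^{1-s}+p^r e_{s-1}$ (with $e_0=0$), which merely restates $g_i=g^{p^{1-i}}g_{i-1}^{p^r}$. Positivity of the $n_j$ and the constraint force $n_d\le 1$; the case $n_d=1$ collapses the monomial to the primitive term $X_d$ or $Y_d$, where $M=0$, so I may assume $n_d=0$ and $\sum_{j=0}^{d-1}p^j n_j=p^d$. Writing $M=\sum_{s=0}^d n_{d-s}e_s$ and inserting the recursion gives
\[
M=\Big(\sum_{s=1}^d n_{d-s}\,p^{1-s}\Big)+p^r\sum_{s=0}^{d-1}n_{(d-1)-s}\,e_s .
\]
Using $\sum_{s=0}^d n_{d-s}p^{-s}=p^{-d}\sum_i p^i n_i=1$, the first parenthesis equals $p\bigl(1-n_d\bigr)=p$, an integer multiple of $p$, while the second summand is $p^r$ times a sum of exactly the same shape but with top index $d-1$.

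I would then iterate this peeling. At the $t$-th stage the leading contribution is $p$ times $p^{-(d-t)}\sum_{i=0}^{d-t}p^i n_i-n_{d-t}$, which is an integer because the constraint with $n_d=0$ gives $\sum_{i=0}^{d-t}p^i n_i=p^d-\sum_{i=d-t+1}^{d-1}p^i n_i$, a multiple of $p^{d-t}$; the remaining summand is $p^r$ times a sum of the same shape with top index $d-t-1$. After $d$ steps the tail vanishes and $M$ is exhibited as $\sum_{t=0}^{d-1}p^{rt}\,\mathrm{fs}_t$ with each $\mathrm{fs}_t\in p\mathbb{Z}$, so $M$ is a non-negative integer divisible by $p$, as required.

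I expect the divisibility $p\mid M$ to be the only real obstacle. It is tempting to hope that each $n_j$ is divisible by $p^{d-j}$, which would let one factor the scalar through the manifestly-in-$K$ quantities $g_{d-j}^{p^{d-j}}$ (these lie in $K$ since $g^p\in K$), but that is false for a general monomial of weighted degree $p^d$, so no term-by-term estimate suffices; the cancellation is produced globally by reading the weighted-degree constraint modulo successive powers of $p$, which is exactly what the telescoping above encodes. The integrality $S_d\in\mathbb{Z}[\dots]$, the weighted homogeneity, and Lemma \ref{wittlem} are standard and enter only in setting up the reduction, so I do not anticipate difficulty there.
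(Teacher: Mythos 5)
Your proof is correct, and it takes a genuinely different route from the paper's. The paper argues globally: it peels off $u+v$ (the $X_d+Y_d$ part of $S_d$), notes that the remainder $\tilde S_d$ is weighted-homogeneous of degree $p^d$ in $X_0,\dots,X_{d-1}$ alone, and uses the scaling identity of Lemma \ref{wittlem} to pull the single factor $g^p\in K$ out front, writing $g_i=(g^p)^{p^{-i}}g_i'$ and asserting that the residual coefficients $g_i'$ lie in $K$. You instead work monomial by monomial and reduce the whole lemma to the arithmetic claim that $M=\sum_j n_j e_{d-j}$ is a non-negative integer divisible by $p$ whenever $n_j\ge 0$ and $\sum_j p^j n_j=p^d$, which you prove by telescoping the recursion $e_s=p^{1-s}+p^r e_{s-1}$ against the weighted-degree constraint read modulo successive powers of $p$; I checked the telescoping and each $\mathrm{fs}_t=p\cdot p^{-(d-t)}\sum_{i<d-t}p^i n_i$ is indeed a non-negative multiple of $p$, with the tail terminating at $M_d=n_0e_0=0$. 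What your approach buys is uniformity in $d$: it never needs the individual residual coefficients to lie in $K$. That matters, because the paper's residual exponents $e_i-p^{1-i}=\sum_{j=1}^{i-1}p^{j(r+1)-(i-1)}$ are not divisible by $p$ once $i>r+1$ (for instance $r=1$, $i=3$ gives $1+p^2$), so the term-by-term factorization in the paper is only transparent for small $d$ relative to $r$ (it is clean in the case $d=1$, $r=n-1$ used later for the explicit computations); your global cancellation argument covers all $d$, and your closing remark correctly identifies why no term-by-term estimate can work. The cost is that your proof is longer and purely combinatorial, whereas the paper's, where it applies, is a one-line application of homogeneity.
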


\begin{proof}
In a manner similar to eq. $\left(  \ref{exp}\right)  $ we have
\[
g_{i}=g^{e_{i}},e_{i}=\sum_{j=0}^{i}p^{jr-\left(  i-j-1\right)  }=p^{-i+1}%
\sum_{j=0}^{i}p^{j\left(  r+1\right)  }%
\]
for all $0\leq i\leq d.$ Thus,%
\[
g_{i}=g^{p^{-i+1}}g^{\sum_{j=0}^{i}p^{j\left(  r+1\right)  }},
\]
and the second factor, which we will denote by $g_{i}^{\prime},$ is in $K.$
Then$\;g_{i}=\left(  g^{p}\right)  ^{p^{-i}}g_{i}^{\prime},$ and by Lemma
\ref{wittlem}, we can factor $g^{p}$ out of $S_{d}\left(  \left(
g_{d}u^{p^{dr}},\dots,g_{1}u^{p^{r}},u\right)  ;\left(  g_{d}v^{p^{dr}}%
,\dots,g_{1}v^{p^{r}},v\right)  \right)  $ and obtain%
\begin{align*}
S_{d} &  \left(  \left(  g_{d}u^{p^{dr}},\dots,g_{1}u^{p^{r}},u\right)
;\left(  g_{d}v^{p^{dr}},\dots,g_{1}v^{p^{r}},v\right)  \right)  \\
&  =\tilde{S}_{d}\left(  \left(  g_{d}u^{p^{dr}},\dots,g_{1}u^{p^{r}}\right)
;\left(  g_{d}v^{p^{dr}},\dots,g_{1}v^{p^{r}}\right)  \right)  +u+v\\
&  =g^{p}\tilde{S}_{d}\left(  \left(  g_{d}^{\prime}u^{p^{dr}},\dots
,g_{1}^{\prime}u^{p^{r}}\right)  ;\left(  g_{d}^{\prime}v^{p^{dr}},\dots
,g_{1}^{\prime}v^{p^{r}}\right)  \right)  +u+v,
\end{align*}
where $\tilde{S}_{d}\left(  \left(  X_{0},\dots,X_{d-1}\right)  ;\left(
Y_{0},\dots,Y_{d-1}\right)  \right)  $ is a homogenous polynomial (where
$X_{i}$ and $Y_{i}$ each have weight $p^{i}$, as before). Thus, since
$g^{p}\in K$ and $g_{i}^{\prime}\in K$ for all $i$,
\[
S_{d}\left(  \left(  g_{d}u^{p^{dr}},\dots,g_{1}u^{p^{r}},u\right)  ;\left(
g_{d}v^{p^{dr}},\dots,g_{1}v^{p^{r}},v\right)  \right)  \in K\left[
u,v\right]  .
\]
\end{proof}

By picking $g\in K^{1/p}$ we get a well-defined algebra map on $H$ using the
lemma above with $u=t\otimes1,\;v=1\otimes t.\ $However, we obtain a nicer
parameterization of these maps by letting $f=g^{p}\in K^{\times}.$ Let
$f_{1}=f^{1/p},\;f_{i}=f_{1}^{p^{1-i}}f_{i-1}^{p^{r}}=f^{p^{-i}}f_{i-1}%
^{p^{r}}.$

\begin{proposition}
\label{class}Let $0<r<n$ be integers. Let $d=\left\lceil n/r\right\rceil -1.$
Let $f\in K^{\times}.\;$Let $f_{1},f_{2},\dots,f_{d}$ be the sequence given
recursively by
\[
f_{1}=f^{1/p},\;f_{i}=f^{p^{-i}}f_{i-1}^{p^{r}},\;i\geq2
\]
as above. Let $H_{n,r,f}$ be the $K$-algebra $K\left[  t\right]  /\left(
t^{p^{n}}\right)  ,$ and let%
\begin{align*}
\Delta\left(  t\right)   &  =S_{d}\left(  \left(  f_{d}t^{p^{dr}}%
\otimes1,\dots,f_{1}t^{p^{r}}\otimes1,t\otimes1\right)  ;\left(  1\otimes
f_{d}t^{p^{dr}},\dots,1\otimes f_{1}t^{p^{r}},1\otimes t\right)  \right) \\
\varepsilon\left(  t\right)   &  =0\\
\lambda\left(  t\right)   &  =-t
\end{align*}
Then these maps endow $H_{n,r,f}$ with the structure of a $K$-Hopf algebra.
\end{proposition}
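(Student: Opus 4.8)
The plan is to verify the three Hopf algebra axioms — coassociativity, the counit property, and the antipode property — by reducing each to a statement about the Witt addition polynomials $S_d$, and then invoking the associativity of $S_d$ together with Lemma \ref{wittlem}. The first task is to confirm that $\Delta$ is a well-defined $K$-algebra map into $H\otimes H$; since $H=K[t]/(t^{p^n})$ is generated by $t$, it suffices to specify $\Delta(t)$ and check that $\Delta(t)^{p^n}=0$, so that $\Delta$ descends to the quotient. The expression for $\Delta(t)$ lies in $K[u,v]$ evaluated at $u=t\otimes1$, $v=1\otimes t$ by the preceding Lemma (with $g=f^{1/p}$, so $g_i=f_i$); and since $S_d$ is a homogeneous polynomial of weighted degree $p^{dr+\text{(top term)}}$ in which every monomial in $t\otimes 1$ and $1\otimes t$ already vanishes once the total $t$-degree reaches $p^n$, one checks that $\Delta(t)^{p^n}=0$ because raising to the $p^n$ kills all cross terms in characteristic $p$ and leaves $(t\otimes1)^{p^n}+(1\otimes t)^{p^n}=0$. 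I would state this reduction explicitly and then treat the three axioms in turn.

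For coassociativity I would compute both $(\Delta\otimes 1)\Delta(t)$ and $(1\otimes\Delta)\Delta(t)$ in $H\otimes H\otimes H$. Applying $\Delta$ to one tensor factor of $\Delta(t)$ and using that $\Delta$ is an algebra map, each side becomes an iterated $S_d$-expression in the three families $\{f_i t^{p^{ir}}\otimes1\otimes1\}$, $\{1\otimes f_i t^{p^{ir}}\otimes1\}$, $\{1\otimes1\otimes f_i t^{p^{ir}}\}$. The equality of the two sides is then exactly the associativity of Witt addition: $(a+b)+c=a+(b+c)$ componentwise in $W$, which the excerpt has already recorded as ``the component operation $S_d$ is associative for all $d$.'' The key compatibility I must check is that the Frobenius-twisted coefficients $f_i$ interact correctly across tensor factors — that is, that the $i$-th Witt component of each embedded copy carries the coefficient $f_i$ consistently — and this is guaranteed by the recursion $f_i=f^{p^{-i}}f_{i-1}^{p^r}$, which is precisely the relation that makes $f_i t^{p^{ir}}$ behave as the $i$-th coordinate of a single Witt vector under the Frobenius scaling in the second identity of Lemma \ref{wittlem}.

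The counit and antipode axioms follow the model already displayed in equation \eqref{witt} for the perfect-field case, now carried out over $K$. For the counit, I apply $\operatorname{mult}(1\otimes\varepsilon)$ to $\Delta(t)$; using that $\varepsilon$ is an algebra map with $\varepsilon(t)=0$, every term involving the second tensor factor collapses, and by Lemma \ref{wittlem} the result is $S_d$ evaluated at $(f_d t^{p^{dr}},\dots,t)$ in the first slot and $(0,\dots,0)$ in the second, which equals $(f_d t^{p^{dr}},\dots,t)$ reconstituted as $t$ itself — giving $\operatorname{mult}(1\otimes\varepsilon)\Delta(t)=t$, hence $\operatorname{id}_H$; the left counit property is symmetric by cocommutativity. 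For the antipode, I apply $\operatorname{mult}(1\otimes\lambda)$ with $\lambda(t)=-t$, and the defining Witt-vector identity $(a)+(-a)=0$, i.e. $S_d\big((x_0,\dots,x_d);(-x_0,\dots,-x_d)\big)=0$ for the additive inverse, yields $\operatorname{mult}(1\otimes\lambda)\Delta(t)=0=\eta\varepsilon(t)$ as required.

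I expect the main obstacle to be the well-definedness reduction, specifically the verification that $\Delta(t)^{p^n}=0$ so that $\Delta$ factors through $(t^{p^n})$: one must control the weighted-homogeneous structure of $S_d$ carefully to see that no monomial of $t$-degree exceeding $p^n$ survives and that the surviving ``pure'' terms cancel in characteristic $p$. Once $\Delta$ is a legitimate algebra map, the three axioms are genuinely formal consequences of Witt-vector group structure transported through the algebra maps $\varepsilon$, $\lambda$, and the tensor embeddings via Lemma \ref{wittlem}, so the remaining work is bookkeeping rather than a new idea.
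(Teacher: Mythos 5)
Your proposal is correct and follows essentially the same route as the paper: the paper's (very terse) proof simply observes that coassociativity comes from the associativity of $S_d$ and that the counit and antipode identities are the same Witt-vector computations as in eq.~\eqref{witt}, exactly the verifications you carry out (and you rightly correct the counit computation to yield $t$ rather than $0$, and supply the well-definedness check $\Delta(t)^{p^n}=0$ that the paper leaves implicit). The only thing you omit is the paper's alternative one-line argument: base-change to $K^{p^{-\infty}}$, where $H_{n,r,f}\otimes K^{p^{-\infty}}$ is already known to be a Hopf algebra by \cite{Koch01b}, and note that $\Delta(t)$, $\varepsilon(t)$, $\lambda(t)$ are defined over $K$.
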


\begin{proof}
Since the comultiplication is accomplished using Witt vector sums, the
computations here are the same as in eq. $\left(  \ref{witt}\right)  $.
Alternatively, we could use the facts that $H_{n,r,f}\otimes K^{p^{-\infty}}$
is a Hopf algebra by \cite{Koch01b}, and $\Delta\left(  t\right)
,\varepsilon\left(  t\right)  ,$ and $\lambda\left(  t\right)  $ are defined
over $K.$
\end{proof}

\section{Isomorphism Questions}

The Hopf algebras $\left\{  H_{n,r,f}:0<r<n,\;f\in K^{\times}\right\}  $
constructed in Proposition \ref{class} are not all unique. While $n$ and $r$
are isomorphism invariants, different choices of $f$ can lead to isomorphic
Hopf algebras. Here, we will give a sufficient condition on $f,f^{\prime}$ for
$H_{n,r,f}\cong H_{n,r,f^{\prime}}.$ Additionally, if $r$ is sufficiently
large (with respect to $n$) then we will see this condition is necessary as well.

Suppose $H_{n,r,f}=K\left[  t\right]  /\left(  t^{p^{n}}\right)  .$ Pick $g\in
K^{\times},$ and let $u=gt.$ Then, as a $K$-algebra, $H_{n,r,f}=K\left[
u\right]  /\left(  u^{p^{n}}\right)  ;$ with the help of Lemma \ref{wittlem}
we have%
\begin{align*}
\Delta\left(  u\right)   &  =g\Delta\left(  t\right) \\
&  =gS_{d}\left(  \left(  f_{d}t^{p^{dr}}\otimes1,\dots,f_{1}t^{p^{r}}%
\otimes1,t\otimes1\right)  ;\left(  1\otimes f_{d}t^{p^{dr}},\dots,1\otimes
f_{1}t^{p^{r}},1\otimes t\right)  \right) \\
&  =S_{d}\left(  \left(  g^{p^{-d}}f_{d}t^{p^{dr}}\otimes1,\dots,g^{p^{-1}%
}f_{1}t^{p^{r}}\otimes1,gt\otimes1\right)  ;\left(  1\otimes g^{p^{-d}}%
f_{d}t^{p^{dr}},\dots,1\otimes g^{p^{-1}}f_{1}t^{p^{r}},1\otimes gt\right)
\right) \\
&  =S_{d}(\left(  g^{p^{-d}-p^{dr}}f_{d}\left(  gt\right)  ^{p^{dr}}%
\otimes1,\dots,g^{p^{-1}-p^{r}}f_{1}\left(  gt\right)  ^{p^{r}}\otimes
1,gt\otimes1\right)  ;\\
&  \left(  1\otimes g^{p^{-d}-p^{dr}}f_{d}\left(  gt\right)  ^{p^{dr}}%
,\dots,g^{p^{-1}-p^{r}}f_{1}\left(  gt\right)  ^{p^{r}}t^{p^{r}},1\otimes
gt\right)  )\\
&  =S_{d}\left(  \left(  g_{d}f_{d}u^{p^{dr}}\otimes1,\dots,g_{1}f_{1}%
u^{p^{r}}\otimes1,u\otimes1\right)  ;\left(  1\otimes g_{d}f_{d}u^{p^{dr}%
},\dots,g_{1}f_{1}u^{p^{r}},1\otimes u\right)  \right)  ,
\end{align*}
where $g_{i}=g^{p^{-i}-p^{ir}}.$ Now%
\begin{align*}
g_{1}^{p^{1-i}}g_{i-1}^{p^{r}}  &  =\left(  g^{p^{-1}-p^{r}}\right)
^{p^{1-i}}\left(  g^{p^{-\left(  i-1\right)  }-p^{\left(  i-1\right)  r}%
}\right)  ^{p^{r}}\\
&  =g^{p^{-i}-p^{r+1-i}}g^{p^{r+1-i}-p^{ir-r+r}}\\
&  =g^{p^{-i}-p^{ir}}\\
&  =g_{i},
\end{align*}
and
\[
g_{1}^{p}=\left(  g^{p^{-1}-p^{r}}\right)  ^{p}=g^{1-p^{r+1}}.
\]
From this it follows that $H_{n,r,f}\cong H_{n,r,\left(  g^{1-p^{r+1}}\right)
f}.$ More generally,

\begin{proposition}
\label{isoprop}Let $f,f^{\prime}\in K^{\times}.$ If $f/f^{\prime}\in\left(
K^{\times}\right)  ^{p^{r+1}-1}$ then $H_{n,r,f}\cong H_{n,r,f^{\prime}}.$
\end{proposition}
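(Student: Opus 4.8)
The substantive computation has in fact already been carried out in the paragraph immediately preceding the statement: the change of variable $u = gt$, for $g \in K^{\times}$, was shown to convert the Hopf structure on $H_{n,r,f}$ into that of $H_{n,r,(g^{1-p^{r+1}})f}$, yielding the special-case isomorphism $H_{n,r,f} \cong H_{n,r,(g^{1-p^{r+1}})f}$, valid for every $g \in K^{\times}$. My plan is therefore to deduce the general statement from this special case by choosing $g$ appropriately, so the proof reduces to a short arithmetic argument in $K^{\times}$.

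First I would invoke the hypothesis $f/f^{\prime} \in (K^{\times})^{p^{r+1}-1}$ to write $f/f^{\prime} = h^{p^{r+1}-1}$ for some $h \in K^{\times}$. Setting $g = h^{-1}$ and using that $1 - p^{r+1} = -(p^{r+1}-1)$, one computes $g^{1-p^{r+1}} = (g^{-1})^{p^{r+1}-1} = h^{p^{r+1}-1} = f/f^{\prime}$. Applying the special-case isomorphism with this $g$, but with $f^{\prime}$ playing the role of $f$, then gives $H_{n,r,f^{\prime}} \cong H_{n,r,(g^{1-p^{r+1}})f^{\prime}} = H_{n,r,(f/f^{\prime})f^{\prime}} = H_{n,r,f}$, which is exactly the desired conclusion.

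I do not expect any genuine obstacle here, since all of the real content lives in the preceding change-of-variable computation (the verification via Lemma \ref{wittlem} that the transformed coefficients $g_i f_i$ again satisfy the defining recursion). The only point requiring care is that the derived isomorphism holds for an \emph{arbitrary} $g \in K^{\times}$, so that we are free to solve the equation $g^{1-p^{r+1}} = f/f^{\prime}$; and the coset hypothesis $f/f^{\prime} \in (K^{\times})^{p^{r+1}-1}$ is precisely the condition guaranteeing that this equation admits a solution $g \in K^{\times}$. The argument thus makes transparent why the relevant quotient is $K^{\times}/(K^{\times})^{p^{r+1}-1}$, as anticipated in the introduction.
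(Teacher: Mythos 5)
Your proof is correct and follows the same route as the paper: the paper's own proof of this direction simply remarks that it ``has been proved already'' by the preceding change-of-variable computation $u=gt$, which is exactly the special case $H_{n,r,f}\cong H_{n,r,(g^{1-p^{r+1}})f}$ you invoke. Your only addition is to spell out the (correct) elementary step of solving $g^{1-p^{r+1}}=f/f^{\prime}$ from the coset hypothesis, which the paper leaves implicit.
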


\begin{proof}
The statement that $H_{n,r,f}\cong H_{n,r,f^{\prime}}$ whenever $f/f^{\prime
}\in\left(  K^{\times}\right)  ^{p^{r+1}-1}$ has been proved already.
Conversely, suppose $H_{n,r,f}\cong H_{n,r,f^{\prime}}.$ Then $H_{n,r,f}%
\otimes K^{p^{-\infty}}\cong H_{n,r,f^{\prime}}\otimes K^{p^{-\infty}},$ of
course, and, by \cite[Sec. 3]{Koch01b}, $f/f^{\prime}\in\left(  \left(
K^{p^{-\infty}}\right)  ^{\times}\right)  ^{p^{r+1}-1}.$ Thus, the equation%
\[
x^{p^{r+1}}+\frac{f}{f^{\prime}}x=0
\]
has a solution in $K^{p^{-\infty}}.$ If $g$ is such a solution, then $K\left(
g\right)  $ is a separable extension of $K$ contained in $K^{p^{-\infty}}.$
Since $K^{p^{-\infty}}/K$ is purely inseparable we have $g\in K,$ hence
$f/f^{\prime}\in\left(  K^{\times}\right)  ^{p^{r+1}-1}.$
\end{proof}

Note the parallel with the result in \cite{Koch01b} if we replace $\beta$ with
$\beta^{p^{-1}}$-- of course, if $f$ and $f^{\prime}$ are constants these Hopf
algebras are defined over $k$ and we expect the isomorphism condition above to hold.

\section{Hopf Galois Objects}

Let $L=K\left(  x\right)  ,\;x^{p^{n}}=b\in K.$ If $H$ is the monogenic Hopf
algebra with primitive generator, then $L$ is an $H$-Galois object: this is
the Hopf algebra used in the construction of \cite{Chase76}, where he shows
that all modular extensions of $K$ are Hopf Galois objects. The purpose of
this section is to show that each of the rank $p^{n}$ Hopf algebras
constructed above can be used to make $L$ a Hopf-Galois object.

Let $H=H_{n,r,f}$ for some choice of $0<r<n$ and $f\in K.$ Define
$\alpha:L\rightarrow L\otimes H$ by%
\[
\alpha\left(  x\right)  =S_{d}\left(  \left(  f_{d}x^{p^{dr}}\otimes
1,\dots,f_{1}x^{p^{r}}\otimes1,x\otimes1\right)  ;\left(  1\otimes
f_{d}t^{p^{dr}},\dots,1\otimes f_{1}t^{p^{r}},1\otimes t\right)  \right)  .
\]
Since exponentiation-by-$p$ is a $K$-algebra map,
\begin{align*}
\alpha\left(  b\right)   &  =\alpha\left(  x^{p^{n}}\right) \\
&  =S_{d}\left(  \left(  f_{d}^{p^{n}}\left(  x^{p^{dr}}\right)  ^{p^{n}%
}\otimes1,\dots,x^{p^{n}}\otimes1\right)  ;\left(  1\otimes f_{d}^{p^{n}%
}\left(  t^{p^{dr}}\right)  ^{p^{n}},\dots,1\otimes f_{1}\left(  t^{p^{r}%
}\right)  ^{p^{n}},1\otimes t^{p^{n}}\right)  \right) \\
&  =S_{d}\left(  \left(  f_{d}^{p^{n}}\left(  x^{p^{dr}}\right)  ^{p^{n}%
}\otimes1,\dots,x^{p^{n}}\otimes1\right)  ;\left(  0,\dots,0\right)  \right)
=x^{p^{n}}\otimes1=b\otimes1,
\end{align*}
and so $\alpha$ is a well-defined $K$-algebra map.

\begin{lemma}
The map $\alpha$ above gives $L$ the structure of a right $H$-comodule.
\end{lemma}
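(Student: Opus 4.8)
The plan is to verify directly the two comodule axioms stated in the background section, namely that $\left( \alpha\otimes1\right) \alpha=\left( 1\otimes\Delta\right) \alpha$ and $\operatorname{mult}\left( 1\otimes\varepsilon\right) \alpha=\text{id}_{L}$. Since $\alpha$ is already known to be a well-defined $K$-algebra map, and since $L=K\left( x\right) $ is generated as a $K$-algebra by $x$, it suffices to check both identities on the single generator $x$; the general case then follows because all maps involved are $K$-algebra homomorphisms. This reduction is the key structural simplification, and it mirrors exactly the strategy used to define $\Delta$ and to verify the Hopf algebra axioms in equation $\left( \ref{witt}\right) $.

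For the counit identity, I would compute $\operatorname{mult}\left( 1\otimes\varepsilon\right) \alpha\left( x\right) $ by applying $\varepsilon$ to the second tensor factor of each Witt component. Because $\varepsilon\left( t\right) =0$ kills every term of the form $1\otimes f_{i}t^{p^{ir}}$, Lemma \ref{wittlem} lets me rewrite $S_{d}\left( \left( f_{d}x^{p^{dr}},\dots,x\right) ;\left( 0,\dots,0\right) \right) $, and since $\left( 0,\dots,0\right) $ is the identity for Witt addition this collapses to $x$. This is essentially the same computation already carried out to show $\alpha\left( b\right) =b\otimes1$, so it should present no difficulty.

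The coassociativity identity is the main obstacle, but even here the work has largely been absorbed into the formalism. The point is that both $\left( \alpha\otimes1\right) \alpha\left( x\right) $ and $\left( 1\otimes\Delta\right) \alpha\left( x\right) $ are expressions of the form $S_{d}$ applied to three-fold Witt vectors living in $L\otimes H\otimes H$, where the three slots correspond to $x$, $t\otimes1$, and $1\otimes t$ respectively (suitably scaled by the $f_{i}$). Using Lemma \ref{wittlem} to push $\alpha$ and $\Delta$ through the polynomials $S_{d}$, and noting that both sides express the \emph{same} iterated Witt sum by the associativity of Witt addition, the two should coincide term by term. The technical care required is in confirming that the coefficients $f_{i}$ and the Frobenius twists produced by the second part of Lemma \ref{wittlem} match up across the three slots; once the expressions are brought to a common Witt-vector form, associativity of $S_{d}$ finishes the argument, exactly as coassociativity of $\Delta$ followed from the same fact in Proposition \ref{class}.

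Alternatively, one could bypass the direct computation by base-changing to $K^{p^{-\infty}}$: the coaction $\alpha$ is defined over $K$ and becomes, after tensoring up, the comodule structure coming from the known group-scheme action in the perfect-field setting of \cite{Koch01b}, so the comodule axioms hold over $K^{p^{-\infty}}$ and descend to $K$ by faithful flatness. This is the same shortcut offered for Proposition \ref{class}, and I expect the paper may invoke it to avoid grinding through the three-fold Witt identities.
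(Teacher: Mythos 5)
Your proposal is correct and follows essentially the same route as the paper: the paper's proof likewise reduces to checking the two axioms on the generator $x$, deduces coassociativity immediately from the associativity of $S_{d}$, and handles the counit identity by the same Witt-vector computation as in eq.~$\left(\ref{witt}\right)$. The extra detail you supply (and the base-change alternative) is consistent with, but not needed beyond, what the paper does.
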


\begin{proof}
We need to show that $\left(  1\otimes\Delta\right)  \alpha\left(  x\right)
=\left(  \alpha\otimes1\right)  \alpha\left(  x\right)  $ and $\mu\left(
1\otimes\varepsilon\right)  \alpha\left(  x\right)  =x.$ The first follows
immediately from the associativity of $S_{d}.$ The second computation is
similar to the one in eq. $\left(  \ref{witt}\right)  $.
\end{proof}

\begin{proposition}
Let $\gamma:L\otimes L\rightarrow L\otimes H$ be given by
\[
\gamma\left(  a\otimes b\right)  =\left(  a\otimes1\right)  \alpha\left(
b\right)  .
\]
Then $\gamma$ is a $K$-module isomorphism, hence $L$ is an $H$-Galois object.
\end{proposition}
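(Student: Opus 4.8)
The plan is to reduce the whole statement to a basis assertion in a single truncated polynomial ring by exploiting linearity over the left tensor factor. Writing $L\otimes 1\cong L$, one checks directly from $\gamma\left(  a\otimes b\right)  =\left(  a\otimes1\right)  \alpha\left(  b\right)  $ that $\gamma$ is a map of left $L$-modules, where $L\otimes L$ carries the left-factor action and $L\otimes H$ the $L\otimes 1$ action. Both are free left $L$-modules of rank $p^{n}$, with bases $\left\{  1\otimes x^{j}:0\le j<p^{n}\right\}  $ and $\left\{  1\otimes t^{m}:0\le m<p^{n}\right\}  $ respectively, and since $\alpha$ is a $K$-algebra map we have $\gamma\left(  1\otimes x^{j}\right)  =\alpha\left(  x\right)  ^{j}$. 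Thus it suffices to prove that $\left\{  \alpha\left(  x\right)  ^{j}:0\le j<p^{n}\right\}  $ is an $L$-basis of $L\otimes H$. Identifying $L\otimes H\cong R:=L\left[  t\right]  /\left(  t^{p^{n}}\right)  $ with $t=1\otimes t$, this is the entire content of the proposition, and it also suffices since $\dim_{K}\left(  L\otimes L\right)  =\dim_{K}\left(  L\otimes H\right)  =p^{2n}$.

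Set $\beta=\alpha\left(  x\right)  \in R$ and let $\mathfrak{m}=\left(  t\right)  $ be the maximal ideal of the local ring $R$. The first step is to determine $\beta$ modulo $\mathfrak{m}^{2}$. Because $S_{d}\left(  X;0\right)  =X_{d}$ and $S_{d}\left(  0;Y\right)  =Y_{d}$, every monomial of $S_{d}$ other than $X_{d}$ and $Y_{d}$ must contain some $X_{i}$ and some $Y_{j}$; weighted homogeneity of degree $p^{d}$ then forces $i,j\le d-1$ for such a mixed monomial. After substitution each $Y_{j}$ with $j\le d-1$ contributes $t$-degree $p^{\left(  d-j\right)  r}\ge p^{r}\ge 2$, so every mixed monomial lands in $\mathfrak{m}^{p^{r}}\subseteq\mathfrak{m}^{2}$. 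Writing $x$ for $x\otimes1\in L$, this yields $\beta=x+t+\left(  \text{terms in }\mathfrak{m}^{p^{r}}\right)  $, hence $\beta\equiv x+t\pmod{\mathfrak{m}^{2}}$.

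Now put $s=\beta-x\in R$. The previous step gives $s\equiv t\pmod{\mathfrak{m}^{2}}$, so $s\in\mathfrak{m}\setminus\mathfrak{m}^{2}$; since $\dim_{L}\left(  \mathfrak{m}/\mathfrak{m}^{2}\right)  =1$, the element $s$ generates $\mathfrak{m}$, i.e. $s=tu$ for a unit $u\in R$. Consequently $s^{j}=t^{j}u^{j}\in\mathfrak{m}^{j}\setminus\mathfrak{m}^{j+1}$ for each $0\le j<p^{n}$, and the standard filtration argument for the truncated polynomial ring $R$ shows that $\left\{  s^{j}:0\le j<p^{n}\right\}  $ is an $L$-basis. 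Finally, since $x\in L$ is central, $\beta^{j}=\left(  s+x\right)  ^{j}=\sum_{i\le j}\binom{j}{i}x^{j-i}s^{i}$ has leading coefficient $1$ on $s^{j}$; this is a unitriangular change of basis, so $\left\{  \beta^{j}\right\}  $ is again an $L$-basis. Transporting back through $\gamma$ proves that $\gamma$ is an isomorphism of left $L$-modules, hence a $K$-module isomorphism, and $L$ is an $H$-Galois object.

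The one genuinely delicate point is the passage to $\left\{  s^{j}\right\}  $ being a basis. In characteristic $p$ one cannot argue that $\beta^{j}$ has leading term $\binom{j}{m}x^{j-m}t^{m}$ in a triangular fashion, because Frobenius forces $\binom{j}{m}\equiv0$ for many $j\ge p$; already $\beta^{p}\equiv x^{p}+t^{p}\pmod{\mathfrak{m}^{2p}}$, which carries no $t^{1}$ term. The remedy, and the heart of the argument, is the observation that $\beta-x$ generates the maximal ideal of $R$, which makes the leading-term/filtration argument work uniformly in $j$. This in turn rests entirely on the computation of the Witt cross terms in $\alpha\left(  x\right)  $: they contribute only in $t$-degree $\ge p^{r}$, and so lie in $\mathfrak{m}^{2}$.
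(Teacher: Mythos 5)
Your proof is correct and is essentially the argument the paper gives: both hinge on computing $\gamma\left(-x\otimes1+1\otimes x\right)=1\otimes t+\left(\text{terms in }\mathfrak{m}^{2}\right)$, using multiplicativity of $\gamma$ to control its powers, and concluding by an $L$-linear independence/dimension count. Your explicit justification that the mixed Witt monomials land in $\mathfrak{m}^{p^{r}}$ is a welcome elaboration of the paper's unexplained ``$t^{2}\xi$'' term, and the final unitriangular change of basis is harmless but not needed, since $s^{j}=\gamma\left(\left(-x\otimes1+1\otimes x\right)^{j}\right)$ already exhibits a basis in the image.
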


\begin{proof}
First, notice that since $\alpha$ is a $K$-algebra map we have that $\gamma$
preserves multiplication, i.e., $\gamma\left(  \left(  a\otimes b\right)
\left(  c\otimes d\right)  \right)  =\gamma\left(  a\otimes b\right)
\gamma\left(  c\otimes d\right)  $. Also, $\gamma$ is an $L$-module map if we
view $L\otimes L$ as an $L$-module via the first factor since $\gamma\left(
a\otimes b\right)  =\left(  a\otimes1\right)  \gamma\left(  b\right)  .$ Since
$L\otimes L$ and $L\otimes H$ are both $K$-vector spaces of dimension
$p^{2n},$ it suffices to show that $\gamma$ is onto. Now%
\begin{align*}
\gamma\left(  -x\otimes1+1\otimes x\right)   &  =-\left(  x\otimes1\right)
\alpha\left(  1\right)  +\left(  1\otimes1\right)  \alpha\left(  x\right) \\
&  =-x\otimes1+S_{d}\left(  \left(  f_{d}x^{p^{dr}}\otimes1,\dots
,x\otimes1\right)  ;\left(  1\otimes f_{d}t^{p^{dr}},\dots,1\otimes t\right)
\right) \\
&  =-x\otimes1+x\otimes1+1\otimes t+t^{2}\xi\\
&  =1\otimes t+t^{2}\xi_{1}%
\end{align*}
for some $\xi_{1}\in L\otimes H.$ As $\gamma$ is multiplicative we see that
\[
\gamma\left(  \left(  -x\otimes1+1\otimes x\right)  ^{i}\right)  =1\otimes
t^{i}+t^{i+1}\xi_{i},\;\xi_{i}\in L\otimes H
\]
for $1\leq i<p^{n}.\;$Thus $\left\{  \gamma\left(  \left(  -x\otimes1+1\otimes
x\right)  ^{i}\right)  \right\}  $ is an $L$-linearly independent set, we have
$\dim_{L}\operatorname{Im}\gamma\geq p^{n}.$ As $\left[  L:K\right]  =p^{n}$
we have $\dim_{K}\operatorname{Im}\gamma\geq p^{2n}$ so $\operatorname{Im}%
\gamma=L\otimes H.$
\end{proof}

One will notice many parallels between this theory and the Kummer theory of
formal groups construction in, e.g., \cite[Sec. 39]{Childs00}.\ This is to be
expected since a smooth resolution for $\operatorname*{Spec}H_{n,r,f}$ can be
easily constructed by adapting the resolution in \cite[Sec. 4]{Koch03}.

Of course, there are many different descriptions for the same field extension
$L.$ Indeed, pick $g\in K^{\times}$ and let $y=gx.$ Then $y^{p^{n}}=g^{p^{n}%
}b\in K$ and so $L=K\left(  y\right)  .$ With the coaction above we have%
\begin{align*}
\alpha\left(  y\right)   &  =\alpha\left(  gx\right)  \\
&  =gS_{d}\left(  \left(  f_{d}x^{p^{dr}}\otimes1,\dots,x\otimes1\right)
;\left(  1\otimes f_{d}t^{p^{dr}},\dots,1\otimes t\right)  \right)  \\
&  =S_{d}\left(  \left(  g^{p^{-d}}f_{d}x^{p^{dr}}\otimes1,\dots
,x\otimes1\right)  ;\left(  1\otimes g^{p^{-d}}f_{d}t^{p^{dr}},\dots,1\otimes
t\right)  \right)  \\
&  =S_{d}\left(  \left(  g_{d}f_{d}x^{p^{dr}}\otimes1,\dots,g_{1}f_{1}%
x^{p^{r}}\otimes1,x\otimes1\right)  ;\left(  1\otimes g_{d}f_{d}t^{p^{dr}%
},\dots,g_{1}f_{1}u^{p^{r}}t^{p^{r}},1\otimes t\right)  \right)
\end{align*}
where $g_{i}=g^{p^{-i}-p^{ri}}$ as in the previous section. Thus, since
$g_{1}^{p}=g^{1-p^{r+1}},$ changing the generator of $L$ in this manner
results in the same coaction: $H_{n,r,f}$ acts on $x$ in the same way as
$H_{n,r,fg^{1-p^{r+1}}}$ acts on $y$, and these two Hopf algebras are isomorphic.

On the other hand, let $x_{i}=x^{i},\;1\leq i\leq n-1,\;\gcd\left(
p,i\right)  -1.$ Then $L=K\left(  x\right)  =K\left(  x_{i}\right)  ,$ and
defining
\[
\alpha_{i}\left(  x_{i}\right)  =S_{d}\left(  \left(  f_{d}x_{i}^{p^{dr}%
}\otimes1,\dots,f_{1}x_{i}^{p^{r}}\otimes1,x_{i}\otimes1\right)  ;\left(
1\otimes f_{d}t^{p^{dr}},\dots,1\otimes f_{1}t^{p^{r}},1\otimes t\right)
\right)
\]
allows for a coaction of $H_{n,r,f}$ on $L;$ as $i$ varies each resulting
coaction is different. Thus, there are $\phi\left(  p^{n}\right)
=p^{n-1}\left(  p-1\right)  $ different ways to view $L$ as an $H_{n,r,f}%
$-Galois object.

\begin{remark}
Proposition \ref{isoprop} can be used to provide examples of finite field
extensions $L/K\;$with an infinite number of $K$-Hopf algebras which $L$ is an
$H$-Galois object. For example, let $K=k\left(  T_{1},T_{2},\dots\right)  $
and let $L$ be any purely inseparable extension of degree $p^{2}$ (or
greater). Then $H_{n,r,T_{i}}\not \cong H_{n,r,T_{j}}$ unless $i=j.$
\end{remark}

Both Chase's construction and the Hopf algebras presented here can be
considered under one general theory.\ Indeed, were we to allow $r=n$ and
$f=0,$ then $d=0$ and we recover Chase's Hopf algebra. We have chosen to treat
them as separate cases to simplify the question of isomorphic Hopf algebras --
clearly, the Hopf algebra ``$H_{n,n,f}$'' does not depend at all on $f.$

\section{Explicit Computations:\ the Case $r=n-1$}

We shall now explicitly describe the action of $H:=H_{n,r,f}^{\ast}$ on $L$ in
the case where $r=n-1.$ In this case, $d=1,$ and hence the comultiplication on
$H_{n,r,f}$ is
\[
\Delta\left(  t\right)  =t\otimes1+1\otimes t+f\sum_{\ell=1}^{p-1}%
\frac{1}{\ell!\left(  p-\ell\right)  !}t^{p^{r}\ell}\otimes t^{p^{r}\left(
p-\ell\right)  }.
\]
We view this as a restriction on $r$, not on $n$ -- that is, $L/K$ can be any
extension, but and we only consider the Hopf algebras with $r=n-1.$ This will
provide a family of explicit Hopf Galois actions on any purely inseparable
extension $L/K$ of degree $p^{n},\;n\geq2.$

As a $K$-module, $H$ has a basis $\left\{  z_{0}=1,z_{1},\dots,z_{p^{r}%
}\right\}  $ with $z_{i}:H\rightarrow K$ given by
\[
z_{j}\left(  t^{i}\right)  =\delta_{i,j},
\]
where $\delta_{i,j}$ is the Kronecker delta. The algebra structure on $H$ is
induced from the coalgebra structure on $H_{n,r,f};$ explicitly,%
\begin{equation}
z_{j_{1}}z_{j_{2}}\left(  h\right)  =\operatorname{mult}\left(  z_{j_{1}%
}\otimes z_{j_{2}}\right)  \Delta\left(  h\right)  . \label{zmult}%
\end{equation}
We claim that $\left\{  z_{p},z_{p^{2}},z_{p^{3}},\dots,z_{p^{r}}\right\}  $
generate $H$ as a $K$-algebra.

We start with a result which will facilitate the study of the algebra
structure of $H$ as well as the action of $H$ on $L.$

\begin{lemma}
\label{thepowlem}Let%
\[
S_{f}\left(  u,v\right)  =u+v+f\sum_{\ell=1}^{p-1}\frac{1}{\ell!\left(
p-\ell\right)  !}u^{p^{r}\ell}v^{p^{r}\left(  p-\ell\right)  }.
\]
Then, for every positive integer $i$, $S_{f}\left(  u,v\right)  ^{i}$ is a
$K$-linear combination of elements of the form
\[
u^{i_{1}+p^{r}\ell^{\prime}}v^{i_{2}+p^{r}\ell^{\prime\prime}},
\]
where%
\begin{align*}
\ell^{\prime}  &  =i_{3,1}+2i_{3,2}+\cdots+\left(  p-1\right)  i_{3,p-1}\\
\ell^{\prime\prime}  &  =\left(  p-1\right)  i_{3,1}+\left(  p-2\right)
i_{3,2}+\cdots+i_{3,p-1},
\end{align*}
and $i_{3,1}+i_{3,2}+\cdots+i_{3,p-1}=i_{3}.$
\end{lemma}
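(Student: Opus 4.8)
The plan is to recognize the assertion as nothing more than the multinomial expansion of $S_f(u,v)^i$, with the index families $i_1$, $i_2$, and $\{i_{3,\ell}\}$ recording how many times each monomial summand of $S_f$ is selected.

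First I would rewrite $S_f(u,v)$ as a sum of exactly $p+1$ monomials over $K$: the two linear terms $u$ and $v$, together with the $p-1$ mixed terms $c_\ell\,u^{p^r\ell}v^{p^r(p-\ell)}$ for $1\le\ell\le p-1$, where $c_\ell=f/\left(\ell!(p-\ell)!\right)$. Before proceeding I would check that each $c_\ell$ genuinely lies in $K$: since $1\le\ell\le p-1$, both $\ell!$ and $(p-\ell)!$ are products of integers in $\{1,\dots,p-1\}$, hence units modulo $p$, so $1/\left(\ell!(p-\ell)!\right)$ is a well-defined element of the prime field $\mathbb{F}_p\subseteq K$ and $c_\ell=f\cdot\left(\ell!(p-\ell)!\right)^{-1}\in K$.

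Next I would apply the multinomial theorem to the $(p+1)$-term sum raised to the $i^{\text{th}}$ power. This produces a sum, indexed by tuples of nonnegative integers $(i_1,i_2,i_{3,1},\dots,i_{3,p-1})$ subject to $i_1+i_2+i_{3,1}+\cdots+i_{3,p-1}=i$, of terms
\[
\frac{i!}{i_1!\,i_2!\,i_{3,1}!\cdots i_{3,p-1}!}\;u^{i_1}v^{i_2}\prod_{\ell=1}^{p-1}\left(c_\ell\,u^{p^r\ell}v^{p^r(p-\ell)}\right)^{i_{3,\ell}}.
\]
Each coefficient is the product of an integer multinomial coefficient with a power of $f$ and powers of the units $c_\ell$, so it lies in $K$. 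I would then collect the exponents of $u$ and $v$ in a single such term: the total exponent on $u$ is $i_1+p^r\sum_\ell\ell\,i_{3,\ell}=i_1+p^r\ell'$ with $\ell'=i_{3,1}+2i_{3,2}+\cdots+(p-1)i_{3,p-1}$, and symmetrically the exponent on $v$ is $i_2+p^r\sum_\ell(p-\ell)i_{3,\ell}=i_2+p^r\ell''$ with $\ell''=(p-1)i_{3,1}+\cdots+i_{3,p-1}$. Writing $i_3=i_{3,1}+\cdots+i_{3,p-1}$ then recovers exactly the asserted shape $u^{i_1+p^r\ell'}v^{i_2+p^r\ell''}$.

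There is no genuine obstacle here, as the entire content is the multinomial theorem; the only points demanding any care are verifying that the factorial denominators are invertible in characteristic $p$ (so that the coefficients make sense and remain in $K$) and tracking the $u$- and $v$-degrees through the product, both of which are routine bookkeeping.
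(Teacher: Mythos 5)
Your proposal is correct and is essentially the same argument as the paper's: the paper expands $S_f(u,v)^i$ by first applying the trinomial theorem (treating the whole $f\sum_\ell$ term as one summand raised to the power $i_3$) and then expanding that factor multinomially over $i_{3,1}+\cdots+i_{3,p-1}=i_3$, which is just a two-stage organization of your single $(p+1)$-term multinomial expansion, with identical exponent bookkeeping for $\ell'$ and $\ell''$. Your explicit check that the factorial denominators are units in characteristic $p$ is a harmless addition the paper leaves implicit.
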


\begin{proof}
We have%
\begin{align*}
S_{f}\left(  u,v\right)  ^{i}  &  =\left(  u+v+f\sum_{\ell=1}^{p-1}%
\frac{1}{\ell!\left(  p-\ell\right)  !}u^{p^{r}\ell}v^{p^{r}\left(
p-\ell\right)  }\right)  ^{i}\\
&  =\sum_{i_{1}+i_{2}+i_{3}=i}\binom{i}{i_{1},i_{2},i_{3}}\left(  u^{i_{1}%
}v^{i_{2}}\right)  \left(  f\sum_{\ell=1}^{p-1}\frac{1}{\ell!\left(
p-\ell\right)  !}u^{p^{r}\ell}v^{p^{r}\left(  p-\ell\right)  }\right)
^{i_{3}}.
\end{align*}
The last factor in each summand can be expanded as%
\[
f^{i_{3}}\sum_{i_{3,1}+\cdots+i_{3,p-1}}\left(  \binom{i_{3}}{i_{3,1}%
,\dots,i_{3,p-1}}\left(  \prod_{j=1}^{p-1}\frac{1}{i_{3,j}!\left(
p-i_{3,j}\right)  !}\right)  u^{^{i_{1}+p^{r}\ell^{\prime}}}v^{^{i_{2}%
+p^{r}\ell^{\prime\prime}}}\right)  .
\]

The result follows.
\end{proof}

Next, we consider powers of the $z_{p^{s}}$'s.

\begin{lemma}
For $0\leq s\leq r,\;1\leq m\leq p-1,\;z_{p^{s}}^{m}=m!z_{mp^{s}}.$
\end{lemma}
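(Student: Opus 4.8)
The plan is to prove the identity by induction on $m$, reducing the whole statement to a single product computation. The base case $m=1$ is the trivial identity $z_{p^{s}}=z_{p^{s}}$. For the inductive step I would assume $z_{p^{s}}^{m-1}=(m-1)!\,z_{(m-1)p^{s}}$ (for some $2\le m\le p-1$) and write
\[
z_{p^{s}}^{m}=z_{p^{s}}\cdot z_{p^{s}}^{m-1}=(m-1)!\,\left(  z_{p^{s}}z_{(m-1)p^{s}}\right),
\]
so that everything rests on the \textbf{key claim}
\[
z_{p^{s}}z_{jp^{s}}=(j+1)\,z_{(j+1)p^{s}}\qquad(0\le s\le r,\ 1\le j\le p-2).
\]
Granting this with $j=m-1$ gives $z_{p^{s}}^{m}=(m-1)!\,m\,z_{mp^{s}}=m!\,z_{mp^{s}}$, as desired. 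Note that $(j+1)p^{s}\le (p-1)p^{r}<p^{n}$, so $z_{(j+1)p^{s}}$ is a legitimate basis element, and that the restriction $j\le p-2$ is exactly what the argument below will require.

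To prove the claim I would use the multiplication rule $\left(  \ref{zmult}\right)$: the value $z_{p^{s}}z_{jp^{s}}\left(  t^{i}\right)$ is the coefficient of $t^{p^{s}}\otimes t^{jp^{s}}$ in $\Delta\left(  t^{i}\right)  =\Delta\left(  t\right)  ^{i}=S_{f}\left(  t\otimes1,1\otimes t\right)  ^{i}$. By Lemma \ref{thepowlem}, this power is a $K$-linear combination of monomials $u^{i_{1}+p^{r}\ell^{\prime}}v^{i_{2}+p^{r}\ell^{\prime\prime}}$ with $i_{1}+i_{2}+i_{3}=i$ and $\ell^{\prime}+\ell^{\prime\prime}=p\,i_{3}$, the summand indexed by $i_{3}$ carrying the factor $f^{i_{3}}$; the $i_{3}=0$ part is just $\binom{i}{i_{1},i_{2}}u^{i_{1}}v^{i_{2}}$. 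Extracting the coefficient thus reduces to solving the exponent equations $i_{1}+p^{r}\ell^{\prime}=p^{s}$ and $i_{2}+p^{r}\ell^{\prime\prime}=jp^{s}$ over the admissible index sets.

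The hard part — and the one carrying the real content — is showing that the ``$f$-term'', i.e.\ the summands with $i_{3}\ge1$, never contributes. Here I would argue as follows: since $i_{1}+p^{r}\ell^{\prime}=p^{s}\le p^{r}$ we must have $\ell^{\prime}\le1$, and since $i_{2}+p^{r}\ell^{\prime\prime}=jp^{s}$ with $i_{2}\ge0$ and $j\le p-2$ we must have $\ell^{\prime\prime}\le j\le p-2$. Hence $\ell^{\prime}+\ell^{\prime\prime}\le p-1$. But $\ell^{\prime}+\ell^{\prime\prime}=p\,i_{3}\ge p$ whenever $i_{3}\ge1$, a contradiction. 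Therefore $i_{3}=0$, which forces $\ell^{\prime}=\ell^{\prime\prime}=0$, then $i_{1}=p^{s}$, $i_{2}=jp^{s}$, and consequently $i=(j+1)p^{s}$. This is the single step where the uniform bound $s\le r$ and the inequality $j\le p-2$ are both essential.

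It then remains to evaluate the surviving coefficient, which is the multinomial $\binom{(j+1)p^{s}}{p^{s},\,jp^{s}}=\binom{(j+1)p^{s}}{p^{s}}$. Reducing modulo $p$ by Lucas' theorem, the base-$p$ expansions of $(j+1)p^{s}$ and $p^{s}$ have digits $j+1$ and $1$ in position $s$ (valid since $j+1\le p-1$) and $0$ elsewhere, so $\binom{(j+1)p^{s}}{p^{s}}\equiv\binom{j+1}{1}=j+1\pmod p$. Thus $z_{p^{s}}z_{jp^{s}}\left(  t^{i}\right)  =(j+1)\,\delta_{i,(j+1)p^{s}}$, which is precisely $(j+1)\,z_{(j+1)p^{s}}$. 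This establishes the key claim and completes the induction.
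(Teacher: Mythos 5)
Your proof is correct and follows essentially the same route as the paper: induction on $m$, the multiplication rule $\left(\ref{zmult}\right)$ together with Lemma \ref{thepowlem}, elimination of the $i_{3}\geq1$ terms, and Lucas' theorem for the surviving multinomial coefficient. The only cosmetic difference is how you force $i_{3}=0$ -- you bound $\ell^{\prime}\leq1$ and $\ell^{\prime\prime}\leq p-2$ separately, while the paper adds the two exponent equations to get $mp^{s}=i_{1}+i_{2}+p^{r+1}i_{3}$ and notes $mp^{s}\leq\left(p-1\right)p^{r}<p^{r+1}$ -- and both are valid.
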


\begin{proof}
Clearly, this holds for $m=1.$ Suppose $z_{p^{s}}^{m-1}=\left(  m-1\right)
!z_{\left(  m-1\right)  p^{s}}.$ By eq. $\left(  \ref{zmult}\right)  $ we have%
\begin{align*}
z_{p^{s}}^{m}\left(  t^{i}\right)   &  =\operatorname{mult}\left(  z_{p^{s}%
}^{m-1}\otimes z_{p^{s}}\right)  \Delta\left(  t^{i}\right) \\
&  =\operatorname{mult}\left(  z_{p^{s}}^{m-1}\otimes z_{p^{s}}\right)
S_{f}\left(  t\otimes1,1\otimes t\right)  ^{i}\\
&  =\operatorname{mult}\left(  \left(  m-1\right)  !z_{\left(  m-1\right)
p^{s}}\otimes z_{p^{s}}\right)  \left(  t\otimes1+1\otimes t+f\sum_{\ell
=1}^{p-1}\frac{1}{\ell!\left(  p-\ell\right)  !}t^{p^{r}\ell}\otimes
t^{p^{r}\left(  p-\ell\right)  }\right)  ^{i}.
\end{align*}
By Lemma \ref{thepowlem}, the tensors are of the form
\[
t^{i_{1}+p^{r}\ell^{\prime}}\otimes t^{i_{2}+p^{r}\ell^{\prime\prime}},
\]
with $\ell^{\prime},\ell^{\prime\prime}$ as before. Note that $\ell^{\prime
}+\ell^{\prime\prime}=pi_{3}.$ Since $z_{p^{s}}\left(  t^{\ell}\right)
=\delta_{p^{s},\ell}$ and $z_{\left(  m-1\right)  p^{s}}\left(  t^{\ell
}\right)  =\delta_{\left(  m-1\right)  p^{s},i},$ the expression $\left(
z_{\left(  m-1\right)  p^{s}}\otimes z_{p^{s}}\right)  \left(  t^{i_{1}%
+p^{r}\ell^{\prime}}\otimes t^{i_{2}+p^{r}\ell^{\prime\prime}}\right)  $ is
nontrivial only if%
\begin{align*}
\left(  m-1\right)  p^{s}  &  =i_{1}+p^{r}\ell^{\prime}\\
p^{s}  &  =i_{2}+p^{r}\ell^{\prime\prime}.
\end{align*}
If we add the two equations together we get%
\[
mp^{s}=i_{1}+i_{2}+p^{r+1}i_{3}.
\]
From this it is clear that $i_{3}=0,$ which means $\ell^{\prime}=\ell
^{\prime\prime}=0$ as well. Thus $i_{2}=p^{s}$ and $i_{1}=\left(  m-1\right)
p^{s},$ hence $i=mp^{s}$ and with the help of Lucas' Theorem \cite{Fine47} we
get%
\begin{align*}
z_{p^{s}}^{m}\left(  t^{mp}\right)   &  =\left(  m-1\right)  !\binom{mp^{s}%
}{\left(  m-1\right)  p^{s},p^{s},0}\\
&  =\left(  m-1\right)  !\binom{mp^{s}}{p^{s}}\\
&  =\left(  m-1\right)  !m\\
&  =m!
\end{align*}
Therefore, $z_{p^{s}}^{m}=m!z_{mp^{s}}.$
\end{proof}

\begin{lemma}
For $0\leq s\leq r-1,\;z_{p^{s}}^{p}=0.$
\end{lemma}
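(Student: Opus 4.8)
The plan is to show that $z_{p^s}^p = 0$ by evaluating the functional $z_{p^s}^p$ on each basis element $t^i$ of $H_{n,r,f}$ and verifying it always returns zero. By iterating equation $\left(\ref{zmult}\right)$, we have
\[
z_{p^s}^p\left(t^i\right) = \operatorname{mult}\left(z_{p^s}\otimes\cdots\otimes z_{p^s}\right)\Delta^{(p-1)}\left(t^i\right),
\]
where $\Delta^{(p-1)}$ is the $(p-1)$-fold iterated comultiplication. Rather than expand this $p$-fold product directly, I would leverage the previous lemma: since $z_{p^s}^{p-1} = (p-1)!\,z_{(p-1)p^s}$, we can write $z_{p^s}^p = (p-1)!\,z_{(p-1)p^s}z_{p^s}$ and apply $\left(\ref{zmult}\right)$ just once, exactly mirroring the inductive step of the preceding proof.

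\textbf{First} I would apply Lemma \ref{thepowlem} to $S_f\left(t\otimes1,1\otimes t\right)^i = \Delta\left(t^i\right)$, so that the relevant tensors appearing in the expansion have the form $t^{i_1+p^r\ell'}\otimes t^{i_2+p^r\ell''}$ with $\ell'+\ell''=pi_3$ and $i_1+i_2+i_3=i$. \textbf{Then} the term $\left(z_{(p-1)p^s}\otimes z_{p^s}\right)\left(t^{i_1+p^r\ell'}\otimes t^{i_2+p^r\ell''}\right)$ is nonzero only when
\begin{align*}
(p-1)p^s &= i_1 + p^r\ell'\\
p^s &= i_2 + p^r\ell''.
\end{align*}
Adding these gives $p^{s+1} = i_1 + i_2 + p^{r+1}i_3$. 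The key point is that since $s \le r-1$, we have $s+1 \le r < r+1$, so $p^{s+1} < p^{r+1}$; this forces $i_3 = 0$, whence $\ell'=\ell''=0$ and $i_1 = (p-1)p^s$, $i_2 = p^s$, giving $i = p^{s+1}$. So the only potentially nonzero evaluation is at $t^{p^{s+1}}$.

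\textbf{Finally}, at $i = p^{s+1}$ the surviving coefficient is a multinomial coefficient $\binom{p^{s+1}}{(p-1)p^s,\,p^s,\,0} = \binom{p^{s+1}}{p^s}$, and I would invoke Lucas' Theorem \cite{Fine47} to see this vanishes modulo $p$: writing $p^{s+1} = p\cdot p^s$ and $p^s$ in base $p$, the base-$p$ digits of $p^s$ are not all dominated by those of $p^{s+1}$ (indeed $\binom{p}{1}\equiv 0\pmod p$ enters the product), so $\binom{p^{s+1}}{p^s}\equiv 0$. Hence $z_{p^s}^p\left(t^{p^{s+1}}\right)=0$ as well, and $z_{p^s}^p = 0$ on all of $H$.

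\textbf{The main obstacle} I anticipate is bookkeeping: one must check carefully that the $s \le r-1$ hypothesis is genuinely what kills $i_3$ (the case $s=r$ would allow $i_3 \ne 0$, which is precisely why that case is excluded), and that Lucas' Theorem is being applied to the correct multinomial. The computation itself is routine once the constraint $p^{s+1} < p^{r+1}$ is identified as the crux.
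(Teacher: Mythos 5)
Your proposal is correct and takes essentially the same route as the paper: reduce via the preceding lemma to $z_{p^{s}}^{p}=\left(  p-1\right)  !\,z_{\left(  p-1\right)  p^{s}}z_{p^{s}}$, apply eq. $\left(  \ref{zmult}\right)$ once, add the two constraint equations to get $p^{s+1}=i_{1}+i_{2}+p^{r+1}i_{3}$ and force $i_{3}=0$ (you correctly pinpoint that $s\leq r-1$ is what makes this work), and then kill the lone surviving evaluation at $t^{p^{s+1}}$ because $\binom{p^{s+1}}{p^{s}}\equiv0\pmod p$ by Lucas' Theorem. One small wording quibble that does not affect the argument: in the Lucas product all digits are less than $p$, so the vanishing factor is $\binom{0}{1}=0$ at the position-$s$ digit, not ``$\binom{p}{1}$''.
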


\begin{proof}
We have%
\[
z_{p^{s}}^{p}\left(  t^{i}\right)  =\left(  p-1\right)  !\operatorname{mult}%
\left(  z_{\left(  p-1\right)  p^{s}}\otimes z_{p^{s}}\right)  \left(
\sum_{i_{1}+i_{2}+i_{3}=i}\binom{i}{i_{1},i_{2},i_{3}}\left(  t^{i_{1}}\otimes
t^{i_{2}}\right)  \left(  f\sum_{\ell=1}^{p-1}\frac{1}{\ell!\left(
p-\ell\right)  !}t^{p^{r}\ell}\otimes t^{p^{r}\left(  p-\ell\right)  }\right)
^{i_{3}}\right)  .
\]
If $\left(  z_{\left(  p-1\right)  p}\otimes z_{p}\right)  \left(
t^{i_{1}+p^{r}\ell^{\prime}}\otimes t^{i_{2}+p^{r}\ell^{\prime\prime}}\right)
$ is nontrivial then%
\begin{align*}
\left(  p-1\right)  p^{s}  &  =i_{1}+p^{r}\ell^{\prime}\\
p^{s}  &  =i_{2}+p^{r}\ell^{\prime\prime}.
\end{align*}
Again, $i_{3}=0,$ so $i_{2}=p^{s}$ and $i_{1}=\left(  p-1\right)  p^{s},$
hence $i=p^{s+1}.$ But then%
\begin{align*}
z_{p^{s}}^{p}\left(  t^{p^{s+1}}\right)   &  =\left(  p-1\right)
!\binom{p^{s+1}}{\left(  p-1\right)  p^{s},p^{s},0}\\
&  =-\binom{p^{s+1}}{p^{s}}\\
&  =0.
\end{align*}
So $z_{p^{s}}^{p}=0.$
\end{proof}

\begin{remark}
While not part of the generating set we are constructing, notice that the
above results show $z_{1}^{m}=m!z_{m}$ and $z_{1}^{p}=0.$
\end{remark}

The behavior is slightly different for $p^{r}.$

\begin{lemma}
We have $z_{p^{r}}^{p}=fz_{1}$ and $z_{p^{r}}^{p^{2}}=0.$
\end{lemma}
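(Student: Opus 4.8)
The plan is to prove the two identities $z_{p^r}^p = f z_1$ and $z_{p^r}^{p^2}=0$ by the same dual-pairing technique used in the previous lemmas: evaluate $z_{p^r}^p$ on an arbitrary basis element $t^i$ via repeated application of equation $\eqref{zmult}$ and Lemma \ref{thepowlem}, determine for which $i$ the result is nonzero, and compute the resulting multinomial coefficient. The key difference from the case $s \le r-1$ is that now $s=r$, so the exponent $p^{r}\ell$ appearing in $S_f$ can combine with the index $p^s = p^r$ itself; this is precisely what makes the $f$-term survive rather than vanish.

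First I would write $z_{p^r}^p(t^i) = (p-1)!\,\mathrm{mult}\,(z_{(p-1)p^r}\otimes z_{p^r})\,S_f(t\otimes 1,1\otimes t)^i$, using the earlier lemma $z_{p^r}^{p-1}=(p-1)!z_{(p-1)p^r}$. By Lemma \ref{thepowlem} the monomials are $t^{i_1+p^r\ell'}\otimes t^{i_2+p^r\ell''}$, so nonvanishing of the pairing forces
\begin{align*}
(p-1)p^r &= i_1 + p^r\ell' \\
p^r &= i_2 + p^r\ell''.
\end{align*}
Adding gives $p^{r+1} = i_1 + i_2 + p^{r+1}i_3$ (using $\ell'+\ell''=pi_3$). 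Here the crucial point is that, unlike the previous lemma, I cannot immediately conclude $i_3=0$: the solution $i_1=i_2=0$, $i_3=1$ is now admissible, and it is exactly this solution that produces the $f z_1$ term. So I expect two surviving cases, the $i_3=0$ case (forcing $i=p^{r+1}$, which as before yields a vanishing coefficient $\binom{p^{r+1}}{p^r}\equiv 0$) and the $i_3=1$ case (forcing $i_1=i_2=0$, hence $i=p^{r+1}i_3 = p^{r+1}$ as well, but contributing the $f$-coefficient). I would track both contributions to $z_{p^r}^p(t^{p^{r+1}})$ carefully.

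The coefficient bookkeeping is the step I expect to be the main obstacle, since I must show the two cases do not both contribute to the same power in a way that cancels, and that the net result is $f z_1$ rather than some multiple of $z_{p^{r+1}}$. Concretely, the $i_3=1$ contribution carries the factor $f \cdot \frac{1}{\ell!(p-\ell)!}$ from the single $S_f$-correction term, and I must verify via Lucas' theorem and the multinomial identities that the surviving term lands on $t^{p^{r+1}}$ with value exactly $f$, so that $z_{p^r}^p = f z_1$ (note $z_{p^{r+1}}$ is not in the basis since $p^{r+1}=p^n > p^r$, which is why the output is $f z_1$ and not a higher $z$). For the second identity $z_{p^r}^{p^2}=0$, I would then compute $z_{p^r}^{p^2} = (z_{p^r}^p)^{?}$ — more precisely, since $z_{p^r}^p = f z_1$ and $z_1^p = 0$ by the preceding remark, I obtain $z_{p^r}^{p^2} = (z_{p^r}^p)^p = (f z_1)^p = f^p z_1^p = 0$, which dispatches the second claim immediately once the first is established.
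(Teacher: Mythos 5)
Your overall strategy is the paper's: expand $z_{p^r}^p(t^i)$ via $z_{p^r}^{p-1}=(p-1)!\,z_{(p-1)p^r}$ and Lemma \ref{thepowlem}, solve the two support equations, and observe that the case $i_3=1$ (impossible when $s<r$) is exactly what produces the $f$-term. But there is a concrete error in your bookkeeping that, if carried through, would give the wrong answer. In the multinomial expansion of $S_f(t\otimes 1,1\otimes t)^i$ the constraint is $i_1+i_2+i_3=i$; the quantity $i_1+i_2+p^{r+1}i_3$ is the \emph{weighted degree} of the surviving tensor monomial $t^{i_1+p^r\ell'}\otimes t^{i_2+p^r\ell''}$, not the index $i$. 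In the $i_3=1$ case you correctly get $i_1=i_2=0$ (with $\ell'=p-1$, $\ell''=1$), but then $i=i_1+i_2+i_3=1$, not $p^{r+1}$. That is, the functional $z_{p^r}^p$ is supported on $t^{1}$, which is precisely why it is a multiple of $z_1$; the coefficient $(p-1)!\binom{1}{0,0,1}\,f\,\frac{1}{(p-1)!\,1!}=f$ then gives $z_{p^r}^p=fz_1$. Your version instead has the nonzero value occurring at $t^{p^{r+1}}=t^{p^n}=0$, which would force $z_{p^r}^p=0$; your attempted repair (``$z_{p^{r+1}}$ is not in the basis, so the output is $fz_1$'') is a non sequitur — if the support really were at $i=p^{r+1}$ the contribution would simply vanish.

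Two smaller remarks. For the $i_3=0$ case, the paper dismisses it because $i=p^{r+1}=p^n$ exceeds the range $0\le i\le p^n-1$ (so that case literally does not occur), which is cleaner than invoking $\binom{p^{r+1}}{p^r}\equiv 0$, though your reason is also valid. Your derivation of $z_{p^r}^{p^2}=(fz_1)^p=f^pz_1^p=0$ from the earlier remark $z_1^p=0$ is correct and matches what the paper leaves implicit.
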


\begin{proof}
If the expression $\left(  z_{\left(  p-1\right)  p^{r}}\otimes z_{p^{r}%
}\right)  \left(  t^{i_{1}+p^{r}\ell^{\prime}}\otimes t^{i_{2}+p^{r}%
\ell^{\prime\prime}}\right)  $ in the expansion of $z_{p^{r}}^{p}\left(
t^{i}\right)  $is nontrivial then%
\begin{align*}
\left(  p-1\right)  p^{r}  &  =i_{1}+p^{r}\ell^{\prime}\\
p^{r}  &  =i_{2}+p^{r}\ell^{\prime\prime}.
\end{align*}
If $i_{3}=0$ then $i_{2}=p^{r},$ $i_{1}=\left(  p-1\right)  p^{r}$ and
$i=p^{r+1};$ however, $i<p^{r+1}=p^{n}$ so this cannot occur. Thus
$i_{3}=1,\;\;\ell^{^{\prime}}=p-1,\;\ell^{\prime\prime}=1\;$(both of these can
occur only by setting $i_{3,j}=\delta_{j,p-1}$)$,\;i_{2}=0,$ and $i_{1}=0.$
Hence, $i=1$ and%
\[
z_{p^{r}}^{p}\left(  t\right)  =\left(  p-1\right)  !\binom{1}{0,0,1}%
f\frac{1}{\left(  p-1\right)  !\left(  p-\left(  p-1\right)  \right)  !}=f.
\]
Therefore, $z_{p^{r}}^{p}=fz_{1}.$ That $z_{p^{r}}^{p^{2}}=0$ follows immediately.
\end{proof}

From the results above, we can deduce that $\left\{  z_{p^{s}}:1\leq s\leq
r\right\}  $ generate $H$ as a $K$-algebra.

The coalgebra structure on $H$ is induced from the multiplication on
$H_{n,r,f}$ and is much more straightforward. For all $h\in H$ when we apply
the comultiplication $\Delta$ to it we get a $K$-linear map $H_{n,r,f}\otimes
H_{n,r,f}\rightarrow K$ given by%
\[
\Delta\left(  h\right)  \left(  a\otimes b\right)  =h\left(  ab\right)  .
\]
Thus,%
\[
\Delta\left(  z_{j}\right)  \left(  t^{i_{1}}\otimes t^{i_{2}}\right)
=z_{j}\left(  t^{i_{1}+i_{2}}\right)  =\delta_{j,i_{1}+i_{2}}%
\]
and so%
\[
\Delta\left(  z_{j}\right)  =\sum_{i=0}^{j}z_{j-i}\otimes z_{i}.
\]
Note that this is true for all $j$, not just the powers of $p.$

We summarize.

\begin{proposition}
\label{dual}The Hopf algebra $H$ above is
\begin{align*}
H  &  =K\left[  z_{p},z_{p^{2}},\dots,z_{p^{r}}\right]  /\left(  z_{p}%
^{p},z_{p^{2}}^{p},\dots,z_{p^{r-1}}^{p},z_{p^{r}}^{p^{2}}\right) \\
\Delta\left(  z_{p^{s}}\right)   &  =\sum_{i=0}^{p^{s}}z_{p^{s}-i}\otimes
z_{i}.
\end{align*}
\end{proposition}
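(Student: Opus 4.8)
The plan is to realize $H$ as a quotient of a polynomial algebra and then finish by a dimension count. Write $\tilde{H}=K[X_1,\dots,X_r]/(X_1^p,\dots,X_{r-1}^p,X_r^{p^2})$ and let $\bar\phi:\tilde H\to H$ be the $K$-algebra map sending $X_s\mapsto z_{p^s}$. The relations $z_{p^s}^p=0$ for $1\le s\le r-1$ and $z_{p^r}^{p^2}=0$ established in the lemmas above guarantee that $\bar\phi$ is well defined. The algebra $\tilde H$ has the monomial $K$-basis consisting of $z_p^{a_1}\cdots z_{p^{r-1}}^{a_{r-1}}z_{p^r}^{a_r}$ with $0\le a_s\le p-1$ for $1\le s\le r-1$ and $0\le a_r\le p^2-1$, so $\dim_K\tilde H=p^{r-1}\cdot p^2=p^{r+1}=p^n=\dim_K H$. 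Hence it suffices to prove that $\bar\phi$ is surjective, i.e.\ that every dual-basis vector $z_j$ lies in the subalgebra generated by $z_p,\dots,z_{p^r}$; once surjectivity is known, equality of dimensions forces $\bar\phi$ to be an isomorphism, which simultaneously confirms that the listed relations generate all relations.

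To prove surjectivity I would expand $j$ in base $p$ as $j=c_0+c_1p+\cdots+c_rp^r$ with $0\le c_s\le p-1$ and build $z_j$ as an explicit monomial. The key computational input is a clean multiplication rule: whenever two indices $j_1,j_2$ are both $<p^r$, Lemma \ref{thepowlem} forces $i_3=0$ in the expansion of $S_f(t\otimes1,1\otimes t)^{j_1+j_2}$, so no $f$-terms survive and $z_{j_1}z_{j_2}=\binom{j_1+j_2}{j_1}z_{j_1+j_2}$; by Lucas' theorem this coefficient equals $1$ as soon as $j_1$ and $j_2$ have disjoint base-$p$ supports. The units digit is supplied by the relation $z_{p^r}^p=fz_1$, which lets me absorb a factor $z_{c_0}=\tfrac{1}{c_0!}z_1^{c_0}=\tfrac{1}{c_0!f^{c_0}}z_{p^r}^{pc_0}$ into a power of $z_{p^r}$. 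Combining these with $z_{p^s}^{c_s}=c_s!\,z_{c_sp^s}$ yields, after collecting scalars, a formula of the shape $z_j=\lambda\,z_p^{c_1}\cdots z_{p^{r-1}}^{c_{r-1}}z_{p^r}^{c_r+pc_0}$ with $\lambda\in K^\times$, which in particular establishes the generating claim.

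The one point requiring care -- and the main obstacle -- is the multiplication involving $z_{p^r}$, whose relevant index $c_0+c_rp^r$ is $\ge p^r$, since there Lemma \ref{thepowlem} a priori allows terms with $i_3>0$ (the genuinely $f$-dependent part of the coproduct). I would verify these do not contribute by noting that every index that arises is $<p^{r+1}=p^n$, so the defining constraint $c_0+c_rp^r=i_1+p^{r+1}i_3$ (and the analogous one when multiplying by the lower-order factors $z_{c_sp^s}$, all of index $<p^r$) forces $i_3=0$, collapsing each such product back to the carry-free case. With surjectivity in hand the algebra description follows from the dimension count. Finally, the coalgebra structure is immediate: dualizing the multiplication of $H_{n,r,f}$ gives $\Delta(z_j)(t^{i_1}\otimes t^{i_2})=z_j(t^{i_1+i_2})=\delta_{j,i_1+i_2}$, whence $\Delta(z_j)=\sum_{i=0}^{j}z_{j-i}\otimes z_i$ for every $j$; specializing to $j=p^s$ gives the stated formula, and the Hopf-algebra axioms need no separate verification since $H$ is by construction the linear dual of $H_{n,r,f}$.
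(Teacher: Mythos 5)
Your proof is correct and follows essentially the same route as the paper, which states this proposition as a summary of the preceding lemmas (the relations $z_{p^{s}}^{p}=0$ for $s<r$, $z_{p^{r}}^{p}=fz_{1}$, $z_{p^{r}}^{p^{2}}=0$, and $z_{p^{s}}^{m}=m!z_{mp^{s}}$) and simply asserts that ``we can deduce'' that the $z_{p^{s}}$ generate. Your explicit reconstruction of $z_{j}$ from the base-$p$ digits of $j$ via the carry-free multiplication rule $z_{j_{1}}z_{j_{2}}=\binom{j_{1}+j_{2}}{j_{1}}z_{j_{1}+j_{2}}$, together with the dimension count $p^{r-1}\cdot p^{2}=p^{n}$ showing the listed relations exhaust the ideal, supplies exactly the details the paper leaves implicit; the only blemish is that the constraint you quote should read $j_{1}+j_{2}=i_{1}+i_{2}+p^{r+1}i_{3}$, though your conclusion that $i_{3}=0$ whenever the index sum is below $p^{n}$ is correct.
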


Of course, it is possible to write $\Delta\left(  z_{p^{s}}\right)  $ solely
in terms of $z_{p},\dots,z_{p^{s}},z_{p^{r}},$ but that is not needed for our purposes.

We will now describe the Hopf Galois action of $H$ on $L.$ Then the a
$K$-algebra map $\alpha:L\rightarrow L\otimes H_{n,r,f}$ is given by%
\[
\alpha\left(  x\right)  =x\otimes1+1\otimes t+f\sum_{\ell=1}^{p-1}%
\frac{1}{\ell!\left(  p-\ell\right)  !}x^{p^{r}\ell}\otimes t^{p^{r}\left(
p-\ell\right)  }.
\]
This gives $L$ the structure of an $H_{n,r,f}$-comodule -- in fact it makes
$L$ an $H_{n,r,f}$-Galois object. Here, we compute the induced action of $H$
on $L$ which makes $L/K$ an $H$-Galois extension.

Generally, if $A$ is a $K$-Hopf algebra such that $L$ is an $A$-Galois object,
then $A^{\ast}$ acts on $L$ by%
\begin{equation}
h\left(  y\right)  =\operatorname{mult}\left(  1\otimes h\right)
\alpha\left(  y\right)  ,\;h\in A^{\ast},y\in L. \label{act}%
\end{equation}
Here, it suffices to compute $z_{p^{s}}\left(  x^{i}\right)  $ for $1\leq
s\leq r,\;1\leq i\leq p^{n}-1,$ however it will also be useful to compute
$z_{j}\left(  x^{i}\right)  $ for some choices of $j$ which are not powers of
$p$. Notice that we use $z_{j}\left(  -\right)  $ in two different contexts:
one to describe $z_{j}$ as a map $H_{n,r,f}\rightarrow K,$ the other to
describe how $z_{j}$ acts on $L.$

The first result handles the case $i=1.$

\begin{lemma}
We have
\[
z_{0}\left(  x\right)  =x,\;z_{1}\left(  x\right)  =1,\;z_{p^{r}}\left(
x\right)  =-fx^{p^{r}\left(  p-1\right)  }.
\]
For $1\leq j\leq p^{r}-1$, $z_{j}\left(  x\right)  =0.$
\end{lemma}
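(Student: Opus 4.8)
The plan is to evaluate the action formula $\left(\ref{act}\right)$ directly. Taking $h=z_{j}$ and $y=x$ there, I would write
\[
z_{j}\left(x\right)=\operatorname{mult}\left(1\otimes z_{j}\right)\alpha\left(x\right),
\]
and substitute the explicit coaction $\alpha\left(x\right)=x\otimes1+1\otimes t+f\sum_{\ell=1}^{p-1}\frac{1}{\ell!\left(p-\ell\right)!}x^{p^{r}\ell}\otimes t^{p^{r}\left(p-\ell\right)}$. Since $\operatorname{mult}\left(1\otimes z_{j}\right)$ simply applies the functional $z_{j}$ to each right-hand tensor factor and then multiplies the resulting scalar into $L$, the entire computation collapses via the duality relation $z_{j}\left(t^{m}\right)=\delta_{j,m}$.

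Carrying this out term by term produces the single closed expression
\[
z_{j}\left(x\right)=x\,\delta_{j,0}+\delta_{j,1}+f\sum_{\ell=1}^{p-1}\frac{1}{\ell!\left(p-\ell\right)!}x^{p^{r}\ell}\,\delta_{j,\,p^{r}\left(p-\ell\right)},
\]
from which each case is read off. For $j=0$ only the first summand survives, giving $z_{0}\left(x\right)=x$. For $j=1$ only the middle summand survives: the exponents $p^{r}\left(p-\ell\right)$ appearing in the last sum range over $p^{r},2p^{r},\dots,\left(p-1\right)p^{r}$, and the smallest of these is $p^{r}\geq p>1$, so none equals $1$; hence $z_{1}\left(x\right)=1$.

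For $j=p^{r}$ the Kronecker delta in the sum forces $p-\ell=1$, i.e. $\ell=p-1$, leaving the single coefficient $f/\left(p-1\right)!$. This is the one place where a genuine arithmetic input is needed: I would invoke Wilson's theorem, $\left(p-1\right)!\equiv-1\pmod p$, to identify $f/\left(p-1\right)!=-f$ in $K$, yielding $z_{p^{r}}\left(x\right)=-fx^{p^{r}\left(p-1\right)}$. Finally, for the remaining $j$ with $1\leq j\leq p^{r}-1$, the first delta requires $j=0$, the second requires $j=1$, and the deltas in the sum require $j$ to be one of the multiples $p^{r},2p^{r},\dots,\left(p-1\right)p^{r}$, all of which exceed $p^{r}-1$; thus no term contributes and $z_{j}\left(x\right)=0$. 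The argument is a direct unwinding of the definitions, so the only thing to watch is the bookkeeping of which index $j$ activates which delta, together with the Wilson's theorem sign that turns $f$ into $-f$.
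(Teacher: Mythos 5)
Your proposal is correct and follows essentially the same route as the paper: both substitute $h=z_{j}$, $y=x$ into eq. $\left(\ref{act}\right)$, apply the duality relation $z_{j}\left(t^{m}\right)=\delta_{j,m}$ term by term, and read off the four cases. The only difference is that you make explicit the Wilson's theorem identification $1/\left(p-1\right)!=-1$ in characteristic $p$, a step the paper leaves implicit in ``from which the result follows.''
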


\begin{proof}
Applying eq. $\left(  \ref{act}\right)  $ to $h=z_{j},$ $y=x$ gives%
\[
z_{j}\left(  x\right)  =xz_{j}\left(  1\right)  +z_{j}\left(  t\right)
+f\sum_{\ell=1}^{p-1}\frac{1}{\ell!\left(  p-\ell\right)  !}x^{p\ell}%
z_{j}\left(  t^{p^{r}\left(  p-\ell\right)  }\right)  ,
\]
from which the result follows.
\end{proof}

The second result handles the cases where $i$ is a nontrivial power of $p.$

\begin{lemma}
\label{powcomp}For $1\leq m\leq r$ we have%
\[
z_{0}\left(  x^{p^{m}}\right)  =x^{p^{m}},\;z_{p^{m}}\left(  x^{p^{m}}\right)
=1.
\]
For all other choices of $j,$ $z_{j}\left(  x^{p^{m}}\right)  =0.$
\end{lemma}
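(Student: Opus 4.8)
The plan is to compute $z_j(x^{p^m})$ directly from the action formula \eqref{act}, exactly as in the proof of the previous lemma, but now applied to the iterated coaction on $x^{p^m}$. The key observation is that exponentiation-by-$p$ is a $K$-algebra map, so $\alpha(x^{p^m}) = \alpha(x)^{p^m}$, and since we are in characteristic $p$ the cross terms in the $p$-th power of a sum vanish. First I would apply the Frobenius $p^m$ times to the explicit coaction
\[
\alpha(x) = x\otimes 1 + 1\otimes t + f\sum_{\ell=1}^{p-1}\frac{1}{\ell!(p-\ell)!}\,x^{p^r\ell}\otimes t^{p^r(p-\ell)},
\]
obtaining
\[
\alpha(x^{p^m}) = x^{p^m}\otimes 1 + 1\otimes t^{p^m} + f^{p^m}\sum_{\ell=1}^{p-1}\frac{1}{(\ell!(p-\ell)!)^{p^m}}\,x^{p^{r+m}\ell}\otimes t^{p^{r+m}(p-\ell)}.
\]

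Next I would apply $\operatorname{mult}(1\otimes z_j)$ to this expression and read off the $t$-degrees that survive. The middle term contributes $t^{p^m}$, which $z_j$ detects precisely when $j=p^m$, giving the stated value $z_{p^m}(x^{p^m})=1$; the first term contributes $z_j(1)=\delta_{j,0}$, giving $z_0(x^{p^m})=x^{p^m}$. For every remaining monomial I would check the exponent on $t$. In the sum over $\ell$, the exponent is $p^{r+m}(p-\ell)$; since $1\le m\le r$ we have $r+m\le 2r = n+r-1$, and in fact $p^{r+m}(p-\ell)\ge p^{r+m}\ge p^{r+1}=p^n$ whenever $r+m\ge r+1$, i.e. $m\ge 1$. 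This is the crucial point: every such exponent is at least $p^n$, so $t^{p^{r+m}(p-\ell)}=0$ in $H_{n,r,f}=K[t]/(t^{p^n})$, killing the entire sum before $z_j$ is even applied. Hence only the two surviving terms above remain, and for all $j$ outside $\{0,p^m\}$ we get $z_j(x^{p^m})=0$.

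The step that requires the most care is the degree bound $p^{r+m}(p-\ell)\ge p^n$ that forces the Frobenius-twisted sum to vanish, since this is what distinguishes the clean behavior here from the $i=1$ case in the preceding lemma, where the term $z_{p^r}(x)=-fx^{p^r(p-1)}$ was nonzero. I would verify this bound explicitly using $n=r+1$ (as we are in the case $r=n-1$) together with $m\ge 1$, so that $p^{r+m}(p-\ell)\ge p^{r+1}=p^n$; this confirms that $t^{p^{r+m}(p-\ell)}$ vanishes identically. With the sum eliminated, the remaining evaluation $z_j(x^{p^m}) = x^{p^m}\,z_j(1) + z_j(t^{p^m})$ is immediate from the definition $z_j(t^i)=\delta_{i,j}$, completing the proof.
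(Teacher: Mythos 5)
Your proposal is correct and follows essentially the same route as the paper: compute $\alpha(x^{p^m})=\alpha(x)^{p^m}$ via Frobenius, observe that the summed terms die because $p^{r+m}(p-\ell)\geq p^{r+1}=p^n$ kills $t^{p^{r+m}(p-\ell)}$ in $K[t]/(t^{p^n})$, and then evaluate $z_j(x^{p^m})=x^{p^m}z_j(1)+z_j(t^{p^m})$ directly. Your version is in fact slightly more careful than the paper's, which neglects to raise $f$ and the factorial coefficients to the $p^m$-th power in the intermediate expression (harmless, since those terms vanish anyway).
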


\begin{proof}
The computations are facilitated by observing that $\alpha\left(  x^{p^{m}%
}\right)  =x^{p^{m}}\otimes1+1\otimes t^{p^{m}}.$ Indeed,%
\begin{align*}
\alpha\left(  x^{p^{m}}\right)   &  =\alpha\left(  x\right)  ^{p^{m}}\\
&  =\left(  x\otimes1+1\otimes t+f\sum_{\ell=1}^{p-1}\frac{1}{\ell!\left(
p-\ell\right)  !}x^{p^{r}\ell}\otimes t^{p^{r}\left(  p-\ell\right)  }\right)
^{p^{m}}\\
&  =x^{p^{m}}\otimes1+1\otimes t^{p^{m}}+f\sum_{\ell=1}^{p-1}\frac{1}%
{\ell!\left(  p-\ell\right)  !}x^{p^{r+m}\ell}\otimes t^{p^{r+m}\left(
p-\ell\right)  }\\
&  =x^{p^{m}}\otimes1+1\otimes t^{p^{m}}%
\end{align*}
since $r+m\geq r+1=n.$ Now for $0\leq j\leq p^{n}-1$ we have%
\[
z_{j}\left(  x^{p^{m}}\right)  =x^{p^{m}}z_{j}\left(  1\right)  +z_{j}\left(
t^{p^{m}}\right)  ,
\]
from which the result follows.
\end{proof}

Next, we have

\begin{theorem}
\label{action}Let $H=H_{n,r,f}^{\ast}\ $be as in Proposition \ref{dual}, that
is,%
\begin{align*}
H &  =K\left[  z_{p},z_{p^{2}},\dots,z_{p^{r}}\right]  /\left(  z_{p}%
^{p},z_{p^{2}}^{p},\dots,z_{p^{r-1}}^{p},z_{p^{r}}^{p^{2}}\right)  \\
\Delta\left(  z_{p^{s}}\right)   &  =\sum_{i=0}^{p^{s}}z_{p^{s}-i}\otimes
z_{i}.
\end{align*}
For $0\leq i\leq p^{n}-1$, write%
\[
i=\sum_{\ell=0}^{r}i_{\left(  \ell\right)  }p^{\ell}.
\]
Then, for $0\leq s\leq r-1$ we have%
\[
z_{p^{s}}\left(  x^{i}\right)  =i_{\left(  s\right)  }x^{i-p^{s}}.
\]
Additionally,%
\[
z_{p^{r}}\left(  x^{i}\right)  =i_{\left(  r\right)  }x^{i-p^{r}}%
-ifx^{p^{r}\left(  p-1\right)  +i-1}.
\]
\end{theorem}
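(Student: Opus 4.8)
The plan is to compute $z_{p^s}(x^i)$ directly from the action formula (\ref{act}), using the fact that $\alpha$ is a $K$-algebra map to reduce everything to the single power $\alpha(x)^i$. First I would write $\alpha(x^i)=\alpha(x)^i=S_f(x\otimes 1,\,1\otimes t)^i$, where $S_f$ is the expression from Lemma \ref{thepowlem}. Applying that lemma expands $\alpha(x)^i$ as a $K$-linear combination of tensors $x^{i_1+p^r\ell'}\otimes t^{i_2+p^r\ell''}$, with the indices $\ell',\ell''$ governed by a multi-index $(i_{3,1},\dots,i_{3,p-1})$ summing to $i_3$, subject to $i_1+i_2+i_3=i$. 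Since $z_{p^s}(t^j)=\delta_{j,p^s}$, evaluating $z_{p^s}(x^i)=\operatorname{mult}(1\otimes z_{p^s})\alpha(x^i)$ kills every summand except those with $i_2+p^r\ell''=p^s$. The whole computation thus reduces to identifying exactly which multi-indices satisfy this one Diophantine constraint and summing their coefficients.

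For $0\le s\le r-1$ I would argue that $i_2+p^r\ell''=p^s<p^r$ forces $\ell''=0$: since each weight $p-j$ (for $1\le j\le p-1$) is positive, $\ell''=\sum_j(p-j)i_{3,j}=0$ can only happen when every $i_{3,j}=0$, whence $i_3=0$ and $\ell'=0$ as well. The surviving summands then have $i_3=0$, $i_2=p^s$, $i_1=i-p^s$, and their total coefficient is the binomial $\binom{i}{p^s}$, giving $z_{p^s}(x^i)=\binom{i}{p^s}x^{i-p^s}$. Finally I invoke Lucas' Theorem \cite{Fine47}: since the base-$p$ digits of $p^s$ are $\delta_{\ell,s}$, one has $\binom{i}{p^s}\equiv i_{(s)}\pmod p$, which yields the stated formula.

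For $s=r$ the constraint $i_2+p^r\ell''=p^r$ now admits two families. The first, $\ell''=0$, proceeds exactly as above (forcing $i_3=0$, $i_2=p^r$, $i_1=i-p^r$) and contributes $\binom{i}{p^r}x^{i-p^r}\equiv i_{(r)}x^{i-p^r}$. The second possibility is $\ell''=1$, which forces $i_2=0$; because the weights $p-j$ are at least $1$, the equation $\sum_j(p-j)i_{3,j}=1$ has the unique solution $i_{3,p-1}=1$ with all other $i_{3,j}=0$, so $i_3=1$, $\ell'=p-1$, and $i_1=i-1$. Tracking the coefficient of the single surviving term $x^{i-1+p^r(p-1)}\otimes t^{p^r}$ --- the multinomial $\binom{i}{i-1,0,1}=i$ times the factor $f/(p-1)!$ coming from the $\ell=p-1$ term of $S_f$ --- and applying Wilson's Theorem $(p-1)!\equiv-1\pmod p$, this second family contributes $-if\,x^{p^r(p-1)+i-1}$. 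Adding the two families gives the asserted expression for $z_{p^r}(x^i)$.

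The step I expect to demand the most care is the $s=r$ analysis, specifically verifying that $\ell''=1$ pins down the multi-index uniquely and then extracting the correct scalar: one must combine the multinomial coefficient, the $1/(\ell!(p-\ell)!)$ weight from $S_f$ at $\ell=p-1$, and Wilson's Theorem to land on the clean coefficient $-if$. By contrast, the $0\le s\le r-1$ case and the $\ell''=0$ half of the $s=r$ case are routine once the Diophantine constraint is pinned down. A minor bookkeeping point is that the exponent $p^r(p-1)+i-1$ may exceed $p^n-1$, in which case it is read in $L$ via $x^{p^n}=b$; this does not affect the identity as an element of $L$.
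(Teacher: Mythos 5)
Your proposal is correct and follows essentially the same route as the paper: expand $\alpha(x^i)=S_f(x\otimes 1,1\otimes t)^i$ via Lemma \ref{thepowlem}, impose the constraint $i_2+p^r\ell''=p^s$, and split into the $\ell''=0$ case (Lucas' Theorem giving $i_{(s)}$) and, for $s=r$, the additional $\ell''=1$ case with the unique multi-index $i_{3,p-1}=1$ and Wilson's Theorem giving the coefficient $-if$. Your explicit justification that $\ell''=0$ forces every $i_{3,j}=0$, and your remark on reducing the exponent $p^r(p-1)+i-1$ in $L$, are slightly more careful than the paper's treatment but do not change the argument.
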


\begin{remark}
Note that if $i<p^{s}<p^{r}$ then $z_{p^{s}}\left(  x^{i}\right)  =0,$ and if
$i<p^{r}$ then $z_{p^{r}}\left(  x^{i}\right)  =-ifx^{p^{r}\left(  p-1\right)
+i-1}.$
\end{remark}

\begin{proof}
We have%
\begin{align*}
z_{p^{s}}\left(  x^{i}\right)   &  =\operatorname{mult}\left(  1\otimes
z_{p^{s}}\right)  \alpha\left(  x^{i}\right) \\
&  =\operatorname{mult}\left(  1\otimes z_{p^{s}}\right)  S_{f}\left(
x\otimes1,1\otimes t\right)  ^{i}\\
&  =\operatorname{mult}\left(  1\otimes z_{p^{s}}\right)  \left(
x\otimes1+1\otimes t+f\sum_{\ell=1}^{p-1}\frac{1}{\ell!\left(  p-\ell\right)
!}x^{p^{r}\ell}\otimes t^{p^{r}\left(  p-\ell\right)  }\right)  ^{i}\\
&  =\operatorname{mult}\left(  1\otimes z_{p^{s}}\right)  \sum_{i_{1}%
+i_{2}+i_{3}=i}\binom{i}{i_{1},i_{2},i_{3}}\left(  x^{i_{1}}\otimes t^{i_{2}%
}\right)  \left(  f\sum_{\ell=1}^{p-1}\frac{1}{\ell!\left(  p-\ell\right)
!}x^{p^{r}\ell}\otimes t^{p^{r}\left(  p-\ell\right)  }\right)  ^{i_{3}}.
\end{align*}
After expanding, the tensors are of the form $x^{i_{1}+p^{r}\ell^{\prime}%
}\otimes t^{i_{2}+p^{r}\ell^{\prime\prime}},$ $\ell^{\prime},\ell
^{\prime\prime}$ as before. Applying $1\otimes z_{p^{s}}$ to this expression
will give $0$ unless%
\begin{equation}
p^{s}=i_{2}+p^{r}\ell^{\prime\prime}. \label{spow}%
\end{equation}
Assume first that $s<r.$ Since $p^{r}>p^{s}$ we see that $i_{3}=0$ and
$i_{2}=p^{s}.$ Thus $i_{1}=i-p^{s}$ and we get%
\begin{align*}
z_{p^{s}}\left(  x^{i}\right)   &  =\binom{i}{i-p^{s},p^{s},0}x^{i-p^{s}%
}z_{p^{s}}\left(  t^{p^{s}}\right) \\
&  =\binom{i}{p^{s}}x^{i-p^{s}}\\
&  =i_{\left(  s\right)  }x^{i-p^{s}},
\end{align*}
as desired.

Now we consider the case $s=r.$ Then $i_{3}=0,\;i_{2}=p^{r},\;i_{1}=i-p^{r}$
certainly satisfies eq. $\left(  \ref{spow}\right)  $. However, we get an
additional solution to this equation, namely $i_{3}=1,\;\ell=p-1,\;i_{2}%
=0,\;i_{1}=i-1.$ Thus%
\begin{align*}
z_{p^{r}}\left(  x^{i}\right)   &  =\binom{i}{i-p^{r},p^{r},0}x^{i-p^{r}%
}z_{p^{r}}\left(  t^{p^{r}}\right)  +\binom{i}{i-1,0,1}x^{i-1}f\frac{1}%
{\left(  p-1\right)  !\left(  p-\left(  p-1\right)  \right)  !}x^{p^{r}\left(
p-1\right)  }z_{p^{r}}\left(  t^{p^{r}}\right) \\
&  =i_{\left(  r\right)  }x^{i-p^{r}}-ifx^{p^{r}\left(  p-1\right)  +i-1}.
\end{align*}
\end{proof}

The results above do not generalize easily to the case $n>r+1.$ Certainly, if
$2r<n$ then the comultiplication on $H_{n,r,f}$ (and its coaction on $L$)
becomes much more complicated, making the computations of the algebra
structure (and the action) of its dual much more involved as well. If
$r+1<n\leq2r,$ computation of the algebra structure of $H_{n,r,f}^{\ast}$ is
somewhat more complex than the case considered here -- in particular,
$z_{p^{r}}^{p}\neq fz_{1}$-- but, as a future paper \cite{Koch14b} will show,
it is possible to show that $H_{n,r,f}^{\ast}$ is generated as a $K$-module by
$\left\{  \prod_{s=0}^{n-1}z_{p^{s}}^{j_{s}}:0\leq j_{s}\leq p-1\right\}  ,$
and much of its action on $L$ can be made explicit.

\section{A Note\ on Modular Extensions}

While the focus of this work is primitive purely inseparable extensions, it
should be pointed out that the constructions here can be adapted easily to
general modular extensions. The following should be clear.

\begin{proposition}
Let $L/K$ be modular, $L\cong L_{1}\otimes\cdots\otimes L_{s}$ with $L_{i}/K$
primitive of degree $p^{n_{i}},\;1\leq i\leq s.\;$For each $i$, pick
$0<r_{i}<n_{i}$ and $f_{i}\in K^{\times}$. Set%
\[
H=H_{n_{1},r_{1},f_{1}}\otimes\cdots\otimes H_{n_{s},r_{s},f_{s}}.
\]
Then $L$ is an $H$-Galois object.
\end{proposition}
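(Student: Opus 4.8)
The plan is to realize the coaction on $L$ as the tensor product of the coactions furnished earlier for the primitive factors. For each $i$, write $H_i=H_{n_i,r_i,f_i}$ and let $\alpha_i:L_i\rightarrow L_i\otimes H_i$ be the $K$-algebra map constructed in the Hopf Galois Objects section, which makes $L_i$ an $H_i$-Galois object. Let $\tau$ denote the canonical rearrangement isomorphism
\[
\tau:\left(  L_1\otimes H_1\right)  \otimes\cdots\otimes\left(  L_s\otimes
H_s\right)  \longrightarrow\left(  L_1\otimes\cdots\otimes L_s\right)
\otimes\left(  H_1\otimes\cdots\otimes H_s\right)  =L\otimes H,
\]
and define $\alpha:L\rightarrow L\otimes H$ by $\alpha=\tau\circ\left(
\alpha_1\otimes\cdots\otimes\alpha_s\right)  .$ Since each $\alpha_i$ is a $K$-algebra map and $\tau$ is a $K$-algebra isomorphism (all rings here being commutative), $\alpha$ is a well-defined $K$-algebra map.

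First I would verify that $\alpha$ gives $L$ the structure of a right $H$-comodule. The comultiplication and counit on $H=H_1\otimes\cdots\otimes H_s$ are, up to the same kind of rearrangement, the tensor products $\Delta_{H_1}\otimes\cdots\otimes\Delta_{H_s}$ and $\varepsilon_{H_1}\otimes\cdots\otimes\varepsilon_{H_s}$ of the structure maps on the factors. Consequently both comodule identities
\[
\left(  \alpha\otimes1\right)  \alpha=\left(  1\otimes\Delta\right)  \alpha
,\qquad\operatorname{mult}\left(  1\otimes\varepsilon\right)  \alpha
=\operatorname{id}_{L}
\]
decompose, factor by factor, into the corresponding identities for each $\alpha_i$, which hold because $L_i$ is an $H_i$-comodule. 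Thus $\alpha$ is a comodule structure map.

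It remains to show that $\gamma:L\otimes L\rightarrow L\otimes H$, $\gamma\left(  a\otimes b\right)  =\left(  a\otimes1\right)  \alpha\left(  b\right)  ,$ is an isomorphism. The clean way is to observe that, after the evident rearrangement of tensor factors, $\gamma$ is identified with $\gamma_1\otimes\cdots\otimes\gamma_s$, where $\gamma_i:L_i\otimes L_i\rightarrow L_i\otimes H_i$ is the corresponding map for the $i$-th factor. Each $\gamma_i$ is an isomorphism since $L_i$ is an $H_i$-Galois object, and a tensor product of $K$-linear isomorphisms is again an isomorphism, so $\gamma$ is an isomorphism and $L$ is an $H$-Galois object. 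Alternatively, since $\dim_K\left(  L\otimes L\right)  =\dim_K\left(  L\otimes H\right)  =p^{2\left(  n_1+\cdots+n_s\right)  },$ one could argue exactly as in the primitive case and establish surjectivity of $\gamma$ directly.

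The only real content, and the step I would treat most carefully, is the bookkeeping of the rearrangement maps: one must confirm that the twist built into the comultiplication of the tensor-product Hopf algebra $H$ is precisely the one intertwining $\tau\circ\left(  \alpha_1\otimes\cdots\otimes\alpha_s\right)  $ with $1\otimes\Delta$, and likewise that $\gamma$ genuinely factors as the tensor product of the $\gamma_i$. Once this formal compatibility is pinned down, the proposition reduces to the statement that a tensor product of Hopf Galois objects is a Hopf Galois object for the tensor product Hopf algebra, with no further computation required.
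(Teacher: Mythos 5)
Your argument is correct and is precisely what the paper intends: the paper offers no proof beyond the remark that the statement "should be clear," and the clear content is exactly your observation that a tensor product of Hopf Galois objects is a Hopf Galois object for the tensor product Hopf algebra, with the comodule axioms and the bijectivity of $\gamma$ checked factor by factor. Your bookkeeping of the rearrangement isomorphisms and the identification $\gamma\cong\gamma_{1}\otimes\cdots\otimes\gamma_{s}$ is sound, so nothing is missing.
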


Thus, the constructions in the previous sections show that any modular
extension can be equipped with numerous Hopf Galois structures. However, it is
not the case that all (local-local) Hopf Galois structures on modular
extensions have been exhibited here, as the following example shows.

\begin{example}
Let $K=\mathbb{F}_{p}\left(  T_{1},T_{2}\right)  .$ Let $L=K\left(
x,y\right)  $ with $x^{p^{2}}=T_{1},y^{p^{2}}=T_{2}.\;$Then $L/K$ is modular.
Let $H=K\left(  t,u\right)  /\left(  t^{p^{2}},u^{p^{2}}\right)  ,$ and define
$\Delta:H\rightarrow H\otimes H$ by%
\begin{align*}
\Delta\left(  t\right)   &  =S_{1}\left(  \left(  u^{p}\otimes1,t\otimes
1\right)  ;\left(  1\otimes u^{p},1\otimes t\right)  \right)  \\
\Delta\left(  u\right)   &  =S_{0}\left(  u\otimes1,1\otimes u\right)  .
\end{align*}
Along with the counit $\varepsilon$ given by $\varepsilon\left(  t\right)
=\varepsilon\left(  u\right)  =0$ and antipode $\lambda\left(  t\right)
=-t,\lambda\left(  u\right)  =-u,$ this gives $H$ the structure of a $K$-Hopf
algebra which is not monogenic. While checking that the Hopf algebra axioms
hold is straightforward, one can also note that $H\otimes K^{p^{-\infty}}$ is
a special case of a ``bigenic'' Hopf algebra given in \cite[Ex. 6.6]{Koch12b}.
Define $\alpha:L\rightarrow L\otimes H$ by%
\begin{align*}
\alpha\left(  t\right)   &  =S_{1}\left(  \left(  y^{p}\otimes1,x\otimes
1\right)  ;\left(  1\otimes u^{p},1\otimes t\right)  \right)  \\
\alpha\left(  u\right)   &  =S_{0}\left(  y\otimes1,1\otimes u\right)  .
\end{align*}
Then $L$ is an $H$-Galois object.
\end{example}

In fact, this example seems to suggest that the biggest obstacle to a modular
extension $L$ being an $H$-Galois object is the algebra structure of $H$; the
coalgebra structure seems to naturally give a coaction. If $L$ is an
$H$-Galois object, then $L\otimes L\cong L\otimes H$.\ Also, $L\otimes L$ is a
truncated polynomial algebra: in the simple, degree $p^{n}\;$case $L=K\left(
x\right)  $, clearly we have $L\left(  u\right)  /\left(  u^{p^{n}}\right)
\cong L\otimes L\ $via $u\mapsto1\otimes x-x\otimes1$. Thus, for a given
modular extension $L/K,$ the problem appears to be reduced to finding the Hopf
algebra structures on the truncated polynomial algebra $L\otimes L.$ We have
not found a local-local Hopf algebra of the proper type (as an algebra) which
does not give $L$ the structure of an $H$-Galois object.

\bibliographystyle{amsalpha}
\bibliography{MyRefs}
\end{document}